\documentclass{siamart1116}
\usepackage{amsfonts}
\usepackage{amsmath,amssymb}
\usepackage[all]{xy}
\usepackage{tikz}
\usepackage{enumitem,url}
\usetikzlibrary{trees}
\usetikzlibrary{arrows,shapes,snakes,automata,backgrounds,petri}
\textwidth=15cm
\textheight=9in
\oddsidemargin=-0.1in
\addtolength{\topmargin}{-.5in}
\usepackage{accents}
\usepackage{bm}
\newlength{\dhatheight}
\newcommand{\doublehat}[1]{%
    \settoheight{\dhatheight}{\ensuremath{\hat{#1}}}%
    \addtolength{\dhatheight}{-0.35ex}%
    \hat{\vphantom{\rule{1pt}{\dhatheight}}%
    \smash{\hat{#1}}}}

\mathchardef\mhyphen="2D 

\newtheorem{thm}{Theorem}
\newtheorem{thm1}{Theorem}[section]
\newtheorem{lemma1}[thm1]{Lemma}

\newtheorem{cor}[thm1]{Corollary}
\newtheorem{def1}[thm1]{Definition}
\newtheorem{example}[thm1]{Example}
\newtheorem{notation}[thm1]{Notation}

\theorembodyfont{\normalfont} 
\newtheorem{remark}[thm1]{Remark}

\newenvironment{myproof}[1] {{\em Proof of {#1}. }}{\hfill$\square$}

\definecolor{light-gray}{gray}{0.9}
\definecolor{mygray}{gray}{0.8}

\setlength{\parindent}{0cm}
\parindent=0in
\parskip 0.3cm
\title{The inheritance of nondegenerate multistationarity in chemical reaction networks}
\author{Murad Banaji\footnotemark[1] \and Casian Pantea\footnotemark[2]}
\begin{document}
\maketitle

\renewcommand{\thefootnote}{\fnsymbol{footnote}}

\footnotetext[1]{Middlesex University, London, Department of Design Engineering and Mathematics. {\tt m.banaji@mdx.ac.uk}.}
\footnotetext[2]{West Virginia University, Department of Mathematics. {\tt cpantea@math.wvu.edu}. Work supported by NSF DMS Grant 1517577 {\em Multistationarity and Oscillations in Biochemical Reaction Networks.}}
\renewcommand{\thefootnote}{\arabic{footnote}}

\begin{abstract}
We study how the properties of allowing multiple positive nondegenerate equilibria (MPNE) and multiple positive linearly stable equilibria (MPSE) are inherited in chemical reaction networks (CRNs). Specifically, when is it that we can deduce that a CRN admits MPNE or MPSE based on analysis of its subnetworks? Using basic techniques from analysis we are able to identify a number of situations where MPNE and MPSE are inherited as we build up a network. Some of these modifications are known while others are new, but all results are proved using the same basic framework, which we believe will yield further results. The results are presented primarily for mass action kinetics, although with natural, and in some cases immediate, generalisation to other classes of kinetics. 
\end{abstract}
\begin{keywords}
Multiple equilibria; chemical reaction networks; motifs

\smallskip
\textbf{MSC.} 80A30; 37C20; 37C25
\end{keywords}

\section{Introduction and context of the paper}

The idea of studying ``network motifs'' in biological systems, namely simple, frequently occurring, structures from which some properties of the system may be inferred, has attracted considerable recent interest (for example \cite{alon2007,Grochow2007}). The reason is natural: systems biology proceeds dually by elucidating the functioning of subsystems while at the same time attempting to fit together these pieces to explain large-scale behaviours in biological systems. Crucial to this study is a need to know how behaviours in a dynamical system are affected by embedding this system in some larger system in well-defined ways. Some rigorous results in this area come from the literature on hyperbolicity and structural stability (\cite{Ano95}, for example), and on monotone control systems (\cite{angeli}, for example). A closely related type of question is when a behaviour known to occur in some member of a {\em family} of dynamical systems $\mathcal{F}$ can be guaranteed to occur in some member of a new family $\mathcal{F}'$ related to $\mathcal{F}$ in some natural sense. 

Our goal is understanding the inheritance of {\em multiple positive nondegenerate equilibria} (MPNE) and {\em multiple positive linearly stable equilibria} (MPSE) in chemical reaction networks (CRNs) which form the main component of most biological models. The literature on multiple equilibria in CRNs is extensive and we do not attempt a survey here; the recent papers \cite{muellerAll}, \cite{banajipantea}, and \cite{Joshi.2015aa} provide some references to key strands of this work. But most work, including our own, has focussed on conditions for {\em precluding} multistationarity; in some ways the opposite of the question treated here.

An exception is the paper by Joshi and Shiu \cite{joshishiu}, which inspired this paper. The authors treated questions of inheritance of multistationarity primarily, but not exclusively, focussed on fully open CRNs. They highlighted that in studying multistationarity (and potentially other nontrivial behaviours in CRNs), we need to identify a relevant partial order on the set of all CRNs, and then search for minimal elements w.r.t. this partial order, which they termed {\em atoms}. This partial order $\leq$ should be such that if some CRN $\mathcal{R}$ displays the behaviour of interest, and $\mathcal{R} \leq \mathcal{R}'$, then $\mathcal{R}'$ must display this behaviour. In the context of multistationarity in fully open CRNs, and with certain kinetic assumptions, there is a nice result: MPNE and MPSE are inherited under the induced subnetwork partial order. This result (Corollary~\ref{coropeninduced} below) was largely known to Joshi and Shiu (Corollary~4.6 of \cite{joshishiu}). In fact, the results here allow considerably stronger claims about the inheritance of MPNE and MPSE in fully open networks (see Remark~\ref{remFObest}).

Results about fully open CRNs do not generalise easily to arbitrary CRNs: building larger CRNs from smaller ones in natural ways such as adding reactions can both introduce and destroy multistationarity. Nevertheless, it is possible to formulate several operations which together define a partial order $\preceq$ on CRNs such that if $\mathcal{R}$ displays MPNE (resp., MPSE) and $\mathcal{R} \preceq \mathcal{R}'$ then $\mathcal{R}'$ displays MPNE (resp., MPSE), and it is this task which we begin in this paper. For example, we can add new reactions involving some new species while preserving MPNE and MPSE provided certain conditions are met (Theorem~\ref{thmblockadd}). Feliu and Wiuf \cite{feliuwiufInterface2013} show that multistationarity can survive when a reaction is ``split'' with the introduction of a new intermediate species. We have a result involving splitting reactions and inserting intermediate complexes (Theorem~\ref{thmintermediates}) which reproduces and generalises some elements of their results.

Availability of a partial order relevant to MPNE or MPSE couples naturally with the task of characterising minimal networks admitting these behaviours w.r.t. this partial order. Although minimal networks need not be small, in the sense of having few species, few reactions, or reactions of low molecularity, a first step is naturally to identify {\em small} minimal networks with the desired behaviour. We do not undertake this task here, but it was begun for the induced subnetwork partial order in \cite{Joshi.2013aa} and continued in the recent work \cite{JoshiShiu2016}.

Our proofs largely rely on the implicit function theorem. We do not need degree theory \cite{soule, Conradi.2016aa}, or homotopy theory \cite{CraciunHeltonWilliams}, or any nontrivial algebra, even though many of the resulting claims can be seen as claims about the zeros of polynomial equations. Our local approach, though powerful, necessarily has limitations discussed in the conclusions. 

The paper is laid out as follows. After introducing some background on CRNs in Section~\ref{secbackground} we present an extended example (Section~\ref{secextended}) beginning with a CRN which displays MPNE and illustrating some of the many ways one might ``build up'' a reaction network, some of which will be proved to preserve MPNE and some of which destroy it. This foreshadows the theorems to follow, while also highlighting their limitations. We then present in Section~\ref{secthms} the results, some known and some new, but defer the proofs until Section~\ref{secproofs}, after illustrating application of the results on a biologically important CRN in Section~\ref{secbioexample}. Finally, we present some discussion and conclusions. A consequence of the analysis in Section~\ref{secbioexample} is previewed in Figure~\ref{fig:MAPK}.

\begin{figure}[h]
\begin{center}
\resizebox{8cm}{!}{
\begin{tikzpicture}
[scale=2.4, 
place/.style={circle,draw=blue!50,fill=blue!20,thick,inner sep=0pt,minimum size=5.5mm},
enzyme/.style={circle,draw= blue!50,fill=white,thick,inner sep=0pt,minimum size=5.5mm},
pre/.style={<-,shorten <=1pt,>=stealth',semithick},
post/.style={->,shorten >=1pt,>=stealth',semithick}]


\node (MAPK) at (-1, 0) {\bf{MAPK}};
\node (MAPK-pp) at (1,0) {\bf{MAPK-pp}};
\node (MAPK-p) at (0,0) {\bf{MAPK-p}}
edge [bend left = 40, post,ultra thick]  (MAPK)
edge [bend right = 40, pre,ultra thick] (MAPK-pp)
edge [bend right = 40, pre,ultra thick]  (MAPK)
edge [bend left = 40, post,ultra thick] (MAPK-pp);

\node (MKK) at (-2, 1) {MKK};
\node (MKK-pp) at (0,1) {\bf{MKK-pp}};
\node (MKK-p) at (-1,1) {MKK-p}
edge [bend left = 40, post]  (MKK)
edge [bend right = 40, pre] (MKK-pp)
edge [bend right = 40, pre]  (MKK)
edge [bend left = 40, post] (MKK-pp);

\node (MKKK) at (-2, 2) {MKKK};
\node (MKKK-p) at (-1,2) {MKKK-p}
edge [bend left = 40, post]  (MKKK)
edge [bend right = 40, pre]  (MKKK);

\node (Ras/MKKKK) at (-1.5, 2.8) {E$_1$};

\node (F1) at (-1.46, 1.67) {$\text{F}_1$};
\node (F2) at (-1.46, .67) {$\text{F}_2$};
\node (F22) at (-.46, .67) {$\text{F}_2$};
\node (F3) at (-.46, -.35) {$\text{F}_{3}$};
\node (F32) at (.54, -.35) {$\text{F}_{3}$};

\draw [semithick, ->,>=stealth', rounded corners=1.5mm, ultra thick]  (0,.9) -- (0,0.55) -- (0.5,0.55) -- (0.5,0.27);
\draw [semithick, ->,>=stealth', rounded corners=1.5mm, ultra thick]  (0,.9) -- (0,0.55) -- (-0.5,0.55) -- (-0.5,0.27);

\draw [semithick, ->,>=stealth', rounded corners=1.5mm]  (-1,1.9) -- (-1,1.55) -- (-0.5,1.55) -- (-0.5,1.27);
\draw [semithick, ->,>=stealth', rounded corners=1.5mm]  (-1,1.9) -- (-1,1.55) -- (-1.5,1.55) -- (-1.5,1.27);

\draw [semithick, ->,>=stealth', rounded corners=1mm]  (-1.5,2.7) -- (-1.5,2.27);

\draw [densely dashed,semithick, ->,>=stealth', rounded corners=1.5mm] (1,0.1) -- (1,2.5) -- (-1.47,2.5) ;

\end{tikzpicture}
}
\caption{MPSE in the Huang-Ferrell MAPK cascade with negative feedback \cite{Huang.1996aa} can be inferred from MPSE in the subnetwork in bold. The full analysis is carried out in Section~\ref{secbioexample}.}\label{fig:MAPK}
\end{center}
\end{figure}
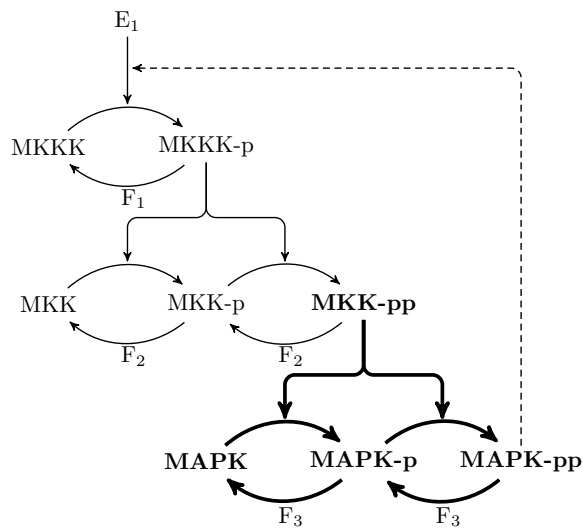

\section{Background on CRNs}
\label{secbackground}
We provide only a brief, informal introduction to CRNs, and the reader is referred to \cite{banajipantea, banajiCRNcount, banajiCRNosci} for further detail.

A {\em complex} is a formal linear combination of chemical species. Given a list of species $X = (X_1, \ldots, X_n)$, and a nonnegative integer vector $a \in \mathbb{R}^n$, we write $a \cdot X$ for the complex $a_1X_1 + a_2 X_2 + \cdots + a_nX_n$. The zero complex $0x_1 + \cdots + 0x_n$ will be denoted $0$. An irreversible {\em reaction} is an ordered pair of complexes, the {\em source} and {\em target} complexes. A CRN consists of a set of species, and a set of irreversible reactions involving these species. A CRN involving species $(X_1, \ldots, X_n)$ is {\em fully open} if it includes all the inflow-outflow reactions $0 \rightleftharpoons X_i$ ($i = 1, \ldots, n$). The {\em fully open extension} of a CRN $\mathcal{R}$ is created by adjoining to $\mathcal{R}$ any such reactions which are absent from $\mathcal{R}$.

\begin{remark}[Forbidden CRNs]
\label{remforbid}
Although it is not required for any results here, and so is not assumed, it is common to forbid from the definition of a CRN reactions with the same complex on left and right hand sides, reactions which figure more than once, and species which participate in no reactions.
\end{remark}

The {\em Petri net graph (PN graph)} of a CRN $\mathcal{R}$, denoted $PN(\mathcal{R})$, is an edge-weighted bipartite digraph \cite{angelipetrinet}, closely related to other bipartite graphs associated with CRNs, notably SR and DSR graphs \cite{craciun1,banajicraciun2}. Its two vertex sets $V_S$ (species vertices) and $V_R$ (reaction vertices) correspond to the species and the reactions of $\mathcal{R}$, and given $u \in V_S$ and $v \in V_R$, there exists an arc $uv$ (resp., $vu$) with weight $w$ if and only if the species corresponding to $u$ occurs with stoichiometry $w$ as a reactant (resp., product) in reaction corresponding to $v$. 

\begin{example}[CRN and its PN graph]
We illustrate below the PN graph of $X+2Y \rightarrow 3Y$, $Y \rightarrow X \rightleftharpoons 0$, with the convention that arc-weights of $1$ are omitted:
\begin{center}
\begin{tikzpicture}[scale=1.2]
\fill[color=black] (1,0.5) circle (1.5pt);
\fill[color=black] (1,-0.5) circle (1.5pt);
\fill[color=black] (3,0.5) circle (1.5pt);
\fill[color=black] (3,-0.5) circle (1.5pt);

\node at (2,0) {$X$};
\node at (4,0) {$Y$};

\draw [<-, thick] (1.85,0.15) .. controls (1.7,0.3) and (1.5,0.5) .. (1.1,0.5);
\draw [->, thick] (1.85,-0.15) .. controls (1.7,-0.3) and (1.5,-0.5) .. (1.1,-0.5);

\draw [->, thick] (2.15,0.15) .. controls (2.3,0.3) and (2.5,0.5) .. (2.9,0.5);
\draw [<-, thick] (3.85,0.15) .. controls (3.7,0.3) and (3.5,0.5) .. (3.1,0.5);
\draw [<-, thick] (2.15,-0.15) .. controls (2.3,-0.3) and (2.5,-0.5) .. (2.9,-0.5);
\draw [->, thick] (3.85,-0.15) .. controls (3.7,-0.3) and (3.5,-0.5) .. (3.1,-0.5);

\draw[->, thick] (3.85,0.05) .. controls (3.5, 0.05) and (3.3, 0.2) .. (3.07,0.43);

\node at (3.35,0.05) {$\scriptstyle{2}$};
\node at (3.6,0.5) {$\scriptstyle{3}$};
\end{tikzpicture}
\end{center}

\end{example}

CRNs $\mathcal{R}_1$ and $\mathcal{R}_2$ are {\em isomorphic} if $PN(\mathcal{R}_1)$ and $PN(\mathcal{R}_2)$ are isomorphic as edge-labelled digraphs, via an isomorphism which preserves the bipartition into species and reaction vertices. In other words, the species and reactions of $\mathcal{R}_1$ can be (independently) relabelled to get $\mathcal{R}_2$. An equivalence class of labelled CRNs under isomorphism is termed an {\em unlabelled CRN}. An {\em induced subnetwork} (or ``embedded network'' in the terminology of \cite{joshishiu}) of a (labelled) CRN $\mathcal{R}$ is a CRN whose PN graph is a vertex-induced subgraph of $PN(\mathcal{R})$, namely can be obtained by deleting some vertices and all their incident arcs from $PN(\mathcal{R})$. The definition extends naturally to unlabelled CRNs and the relationship ``is an induced subnetwork'' defines a partial order on the set of unlabelled CRNs, whether or not the definition of a CRN includes additional restrictions as in Remark~\ref{remforbid}. Isomorphism is discussed in more detail in \cite{banajiCRNcount}. 

In order to discuss multiple equilibria, we need a brief discussion of ODE models of CRNs.

\begin{notation}[Nonnegative and positive vectors]
$\mathbb{R}^n_{\geq 0}$ refers to the nonnegative orthant in $\mathbb{R}^n$, namely $\{x \in \mathbb{R}^n\,|\, x_i \geq 0,\,\,i=1, \ldots, n\}$, while $\mathbb{R}^n_{\gg 0}$ refers to the positive orthant, namely $\{x \in \mathbb{R}^n\,|\, x_i > 0,\,\,i=1,\ldots, n\}$. For $x \in \mathbb{R}^n$, $x \geq 0$ (resp., $x > 0$, rep., $x \gg 0$) will mean that $x \in \mathbb{R}^n_{\geq 0}$ (resp., $x \in \mathbb{R}^n_{\geq 0}\backslash\{0\}$, resp. $x \in \mathbb{R}^n_{\gg 0}$). A vector is nonnegative (resp., positive) if each component is nonnegative (resp., positive).
\end{notation}

Consider $n$ chemical species $X = (X_1, \ldots, X_n)$ with concentrations $(x_1, \ldots, x_n) \in \mathbb{R}^n_{\geq 0}$, and a CRN $\mathcal{R}$ involving $r_0$ reactions between these species. Choose some order for the reactions and let $\Gamma_{ij}$ be the net change in $X_i$ when reaction $j$ occurs. The $n \times r_0$ integer matrix $\Gamma = (\Gamma_{ij})$ is termed the {\em stoichiometric matrix} of $\mathcal{R}$ and the $j$th column of $\Gamma$ is the {\em reaction vector} of reaction $j$. $\mathrm{im}\,\Gamma$ is the {\em stoichiometric subspace} of $\mathcal{R}$ and the nonempty intersection of a coset of $\mathrm{im}\,\Gamma$ with the nonnegative (resp., positive) orthant is a {\em stoichiometry class} (resp., {\em positive stoichiometry class}) of $\mathcal{R}$. We say that $p,q \in \mathbb{R}^n$ are {\em compatible} if $p-q \in \mathrm{im}\,\Gamma$. In spatially homogeneous, deterministic, continuous time models, the evolution of the species concentrations $x$ is governed the ODE:
\begin{equation}
\label{genCRN}
\dot x = \Gamma v(x)\,,
\end{equation}
where $v$ is the {\em vector of reaction rates} or {\em rate vector} of the CRN. We always assume that $v$ is defined and $C^1$ on $\mathbb{R}^n_{\gg 0}$ and belongs to the class of {\em positive general kinetics} \cite{banajipantea}: in brief, on the positive orthant, the rate of an irreversible reaction (i) is positive; (ii) depends only on the concentrations of its reactants; and (iii) increases strictly with the concentration of each reactant. Positive stoichiometry classes are locally invariant for (\ref{genCRN}) for positive general kinetics, and indeed under more general assumptions. Positive general kinetics includes mass action (MA) kinetics as a special case, and in fact all our inheritance results are presented to be consistent with the assumption that all reactions have MA kinetics. The emphasis on MA kinetics is for brevity and readability: we include remarks about other classes of kinetics for which the results hold -- often requiring only minor, formal modification of the proofs.

{\bf Equilibria: nondegeneracy and stability (MPNE and MPSE).} 

\begin{def1}[Reduced Jacobian matrices, the reduced determinant \cite{banajipantea}]
\label{notationreddet}
Consider a differentiable function $F\colon X\subseteq \mathbb{R}^n \to \mathbb{R}^n$ such that that $\mathrm{im}\,F \subseteq S \subseteq \mathbb{R}^n$, where $S$ is some $k$-dimensonal linear subspace of $\mathbb{R}^n$. Let $M$ be some matrix whose columns are a basis for $S$, so there is a unique differentiable function $\hat{F}\colon X \to \mathbb{R}^k$ s.t. $F(x) = M\hat{F}(x)$. We term $D_MF:= (D\hat{F})M$ the {\em reduced Jacobian matrix} of $F$ w.r.t. $M$. A different choice of basis for $S$, written as the columns of a matrix $M'$, leads to a reduced Jacobian matrix $D_{M'}F$ similar to $D_MF$. When interested only in the spectral properties of such reduced Jacobian matrices, we refer to the linear operator they represent as $D_SF$. We refer to $J_SF:=\mathrm{det}(D_SF)$ as the {\em reduced determinant} of $F$ w.r.t. $S$.
\end{def1}

A point $p \in \mathbb{R}^n_{\gg 0}$ is {\em nondegenerate} for (\ref{genCRN}) if the Jacobian determinant of the restricted vector field $\left.\Gamma v(\cdot)\right|_{p+\mathrm{im}\,\Gamma}$ is nonzero at $p$ (see also Definition~4 in \cite{craciun2}). Equivalently, $J_{\mathrm{im}\,\Gamma}(\Gamma Dv(p)) \neq 0$. $J_{\mathrm{im}\,\Gamma}(\Gamma Dv(p))$ can be computed in various ways \cite{banajipantea}. Most simply, $J_{\mathrm{im}\,\Gamma}(\Gamma Dv(p))$ is the sum of $r \times r$ principal minors of $\Gamma Dv(p)$ where $r=\mathrm{rank}\,\Gamma$. Alternatively, following Definition~\ref{notationreddet}, we may choose $\Gamma_0$ to be any matrix whose columns are a basis for $\mathrm{im}\,\Gamma$ and define $Q$ via $\Gamma = \Gamma_0Q$. Then $J_{\mathrm{im}\,\Gamma}(\Gamma Dv(p)) = \mathrm{det}(Q Dv(p) \Gamma_0)$.

A CRN evolving according to (\ref{genCRN}) with kinetics in some fixed class admits {\em multiple positive equilibria} (MPE) if there there exists {\em some} choice of rate function $v$ in this class and $p,q \gg 0$ satisfying $p-q \in \mathrm{im}\,\Gamma$ and $\Gamma v(p) = \Gamma v(q) = 0$. Note that it is implicit in the term ``multiple equilibria'' that the equilibria are compatible. The CRN admits {\em multiple, positive, nondegenerate equilibria} (MPNE) if, additionally, $p$ and $q$ can be chosen to be nondegenerate, namely $J_{\mathrm{im}\,\Gamma}(\Gamma Dv(p))\neq 0$ and $J_{\mathrm{im}\,\Gamma}(\Gamma Dv(q))\neq 0$. The CRN admits {\em multiple positive linearly stable equilibria} (MPSE) if, additionally, $p$ and $q$ can be chosen to be linearly stable w.r.t. their stoichiometry class, namely $D_{\mathrm{im}\,\Gamma}(\Gamma Dv(p))$ and $D_{\mathrm{im}\,\Gamma}(\Gamma Dv(q))$ are both Hurwitz stable (i.e., have eigenvalues in the open left half plane of $\mathbb{C})$. MPSE are clearly MPNE.

Given a CRN $\mathcal{R}$ admitting MPNE (resp., MPSE), we will present a number of ways in which we can modify $\mathcal{R}$ to create a new CRN $\mathcal{R}'$ in such a way that $\mathcal{R}'$ displays MPNE (resp., MPSE). This will generally involve adding more reactions and/or more species, or modifying reactions in some way. A similar project can be undertaken for other dynamical behaviours, in particular for CRNs which admit hyperbolic periodic orbits \cite{banajiCRNosci}, and we remark on this after certain proofs. 

\section{An extended example}
\label{secextended}
To motivate the results to follow and highlight their limitations we present an extended example beginning with a CRN $\mathcal{R}_0$ displaying MPNE with mass action kinetics and then performing various modifications to create CRNs $\mathcal{R}_1$ to $\mathcal{R}_{12}$. Each modification can either be proved to preserve MPNE using a result in the paper, or highlights some point about the scope of these theorems.

{\bf Base reaction network.} We begin with the following CRN (see also Remark 4.4 of \cite{JoshiShiu2016}). 
\begin{equation}
X+2Y \rightarrow 3Y, \quad Y \rightarrow X\,. \tag{\mbox{$\mathcal{R}_0$}}
\end{equation}
If $\mathcal{R}_0$ is given mass action kinetics with both rate constants set at $1$, we get the ODEs $\dot x = -xy^2+y,\,\,\dot y = xy^2-y$. Fixing any $C>2$ and defining $t_{\pm} = \frac{C \pm \sqrt{C^2-4}}{2}$, the reader may easily confirm that $p=(t_+,t_-)$ and $q=(t_-,t_+)$, are positive, nondegenerate, compatible equilibria of the system. That $\mathcal{R}_0$ admits MPNE is also an immediate application of Theorem 4.5 in \cite{JoshiShiu2016}.

{\bf Modification 1: adding new linearly dependent reactions preserves MPNE.} Consider adding to $\mathcal{R}_0$ some new reaction with reaction vector which is a linear combination of existing reaction vectors. For example we might enlarge $\mathcal{R}_0$ with the reaction $X \rightarrow Y$ to get
\begin{equation}
X+2Y \rightarrow 3Y, \quad Y \rightleftharpoons X\,. \tag{\mbox{$\mathcal{R}_1$}}
\end{equation}
As the stoichiometry classes do not change, intuition suggests that a small rate for the new reaction causes only a small perturbation to the vector field on any stoichiometry class, and hence $\mathcal{R}_1$ should admit MPNE. This is formalised as a general result (Theorem~\ref{thmnewdepreac}). 

{\bf Modification 2: adding new independent reactions may destroy MPNE.} Consider now adding to $\mathcal{R}_0$ the reversible reaction $0 \rightleftharpoons X$ with mass action kinetics, to get 
\begin{equation}
X+2Y \overset{\scriptstyle{k_1}}\longrightarrow 3Y, \quad Y \overset{\scriptstyle{k_2}}\longrightarrow X, \quad 0 \overset{\scriptstyle{k_3}}{\underset{\scriptstyle{k_4}}\rightleftharpoons} X\,. \tag{\mbox{$\mathcal{R}_2$}}
\end{equation}
The unique stoichiometry class of this system is now $\mathbb{R}^2_{\geq 0}$, and the unique positive equilibrium of this system is at $x=k_3/k_4,\,y=k_2k_4/(k_1k_3)$. 
Thus $\mathcal{R}_2$ forbids MPNE, and we conclude that adding reactions to a CRN may destroy the capacity for MPNE.

{\bf Modification 3: adding new independent reactions may allow multiple degenerate equilibria.} Consider adding to $\mathcal{R}_0$ the reaction $0 \rightleftharpoons X+Y$ with mass action kinetics, to get
\begin{equation}
X+2Y \overset{\scriptstyle{k_1}}\longrightarrow 3Y, \quad Y \overset{\scriptstyle{k_2}}\longrightarrow X, \quad 0 \overset{\scriptstyle{k_3}}{\underset{\scriptstyle{k_4}}\rightleftharpoons} X + Y\,. \tag{\mbox{$\mathcal{R}_3$}}
\end{equation}
The unique stoichiometry class of $\mathcal{R}_3$ is $\mathbb{R}^2_{\geq 0}$. 
If $k_2/k_1 \neq k_3/k_4$, this system has no equilibria at all. On the other hand if $k_2/k_1 = k_3/k_4$, then the set of equilibria consists of solutions to $xy = k_2/k_1$, and all these equilibria are degenerate. Thus the capacity for MPNE is destroyed, even though $\mathcal{R}_3$ still allows MPE for special choices of rate constants. 

{\bf Modification 4: adding new independent reactions may preserve MPNE.} Consider adding to $\mathcal{R}_0$ three irreversible reactions $X+2Y \rightarrow 2X+2Y$, $3X\rightarrow 4X$ and $X \rightarrow 0$, and choosing mass action rate constants as indicated:
\begin{equation}
2X+2Y \overset{\scriptstyle{1}}\longleftarrow X+2Y \overset{\scriptstyle{1}}\longrightarrow 3Y, \quad Y \overset{\scriptstyle{1}}\longrightarrow X \overset{\scriptstyle{4}}\longrightarrow 0, \quad 3X \overset{\scriptstyle{1}}\longrightarrow 4X\,. \tag{\mbox{$\mathcal{R}_4$}}
\end{equation}
The unique stoichiometry class of $\mathcal{R}_4$ is $\mathbb{R}^2_{\geq 0}$. 
$(p,1/p)$ and $(q, 1/q)$ for $p=\sqrt{2+\sqrt{3}}$ and $q=\sqrt{2-\sqrt{3}}$ are nondegenerate positive equilibria of the system. Thus $\mathcal{R}_4$ admits MPNE. However, there is no obvious {\em local} argument for this: the MPNE constructed for the new system are not in any sense derived as perturbations of original MPNE. 

{\bf Modification 5: adding inflows and outflows of all species preserves MPNE.} Suppose we add to $\mathcal{R}_0$ inflows and outflows of all species to get the fully open extension of $\mathcal{R}_0$:
\begin{equation}
X+2Y \rightarrow 3Y, \quad Y\rightarrow X, \quad X \rightleftharpoons 0 \rightleftharpoons Y\,. \tag{\mbox{$\mathcal{R}_5$}}
\end{equation}
The stoichiometric subspace becomes all of $\mathbb{R}^2$. This modification is well known to preserve MPNE. A variety of proofs are known and a very simple IFT-based argument is possible (Theorem~\ref{thmopenextension}).

{\bf Modification 6: adding a trivial species preserves MPNE.} Suppose we add a new species $Z$ into the reactions of $\mathcal{R}_0$ in a rather trivial way: the species always figures with the same stoichiometry on both sides of any reaction in which it participates. We might get for example:
\begin{equation}
X+2Y+Z \rightarrow 3Y+Z, \quad Y+2Z \rightarrow X+2Z\,.\tag{\mbox{$\mathcal{R}_6$}}
\end{equation}
The stoichiometry classes of $\mathcal{R}_6$ are still one dimensional and are parallel to the $x\mhyphen y$ plane. An easy general result (Theorem~\ref{thmtrivial}) confirms that $\mathcal{R}_6$ must admit MPNE.

{\bf Modification 7: adding new species may destroy MPNE.} Suppose we add a new species $Z$ into the reactions of $\mathcal{R}_0$ as follows:
\begin{equation}
X+2Y \rightarrow 3Y, \quad Y+Z \rightarrow X\,.\tag{\mbox{$\mathcal{R}_7$}}
\end{equation}
The concentration of $Z$ must be zero at any equilibrium, and thus $\mathcal{R}_7$ trivially forbids MPNE. It is also straightforward to find examples where adding a new species into some reactions of a CRN admitting MPNE results in a CRN with a unique positive equilibrium on each stoichiometric class. For example, assuming mass action kinetics, suppose we start with $\mathcal{R}_4$ above and add the new species $Z$ into two reactions to get
\begin{equation}
X+2Y \overset{\scriptstyle{k_1}}\longrightarrow 3Y, \quad X+2Y+2Z \overset{\scriptstyle{k_2}}\longrightarrow 2X+2Y, \quad Y \overset{\scriptstyle{k_3}}\longrightarrow X \overset{\scriptstyle{k_4}}\longrightarrow Z, \quad 3X \overset{\scriptstyle{k_5}}\longrightarrow 4X\,. \tag{\mbox{$\mathcal{R}'_4$}}
\end{equation}
The unique stoichiometry class of $\mathcal{R}'_4$ is all of $\mathbb{R}^3_{\geq0}$, and $\mathcal{R}'_4$ has a unique positive equilibrium at $(\alpha, \beta, \gamma)$ where $\alpha = \sqrt{k_4/2k_5}$, $\beta = k_3/k_1\alpha$, $\gamma = \sqrt{k_4/2k_2\beta^2}$.

{\bf Modification 8: adding new species may preserve MPNE.} Suppose, we add the new species $Z$ into the reactions of $\mathcal{R}_0$ as follows:
\begin{equation}
X+2Y+2Z \rightarrow 3Y+3Z, \quad Y+Z \rightarrow X\,.\tag{\mbox{$\mathcal{R}_8$}}
\end{equation}
Stoichiometry classes of $\mathcal{R}_8$ remain one dimensional and we can confirm that $(x,y,z)=(1,6,4)$, and $(x,y,z)=(3,4,2)$ are positive, nondegenerate, compatible equilibria of $\mathcal{R}_8$. Thus $\mathcal{R}_8$ admits MPNE. 

{\bf Modification 9: adding new reactions involving new species often preserves MPNE.} Consider adding to $\mathcal{R}_0$ a single reversible reaction $Y \rightleftharpoons Z$ involving a new species $Z$ to get:
\begin{equation}
X+2Y \rightarrow 3Y, \quad X \leftarrow Y \rightleftharpoons 2Z\,.\tag{\mbox{$\mathcal{R}_9$}}
\end{equation}
The evolution now occurs in $\mathbb{R}^3$, and the stoichiometric subspace of $\mathcal{R}_9$ is now $2$-dimensional. That $\mathcal{R}_9$ admits MPNE is a simple application of a much more general result (Theorem~\ref{thmblockadd}) allowing the addition of several new reactions involving several new species. We remark that Theorem~\ref{thmblockadd} includes a nondegeneracy condition which in this case is equivalent to the requirement that the new species must not occur with the same stoichiometry on both sides of the new reaction. The reader may confirm, for example, that 
\begin{equation}
X+2Y \rightarrow 3Y, \quad Y \rightarrow X, \quad Z \rightleftharpoons X+Z\,.\tag{\mbox{$\mathcal{R}'_9$}}
\end{equation}
forbids MPNE.


{\bf Modification 10: adding a new species into reactions while also adding its inflow and outflow preserves MPNE.} Suppose we add into some reactions of $\mathcal{R}_0$  a new species $Z$ (in any way) while also adding its inflow and outflow $0 \rightleftharpoons Z$. We may get, for example,
\begin{equation}
X+2Y \rightarrow 3Y, \quad Y+Z \rightarrow X, \quad 0 \rightleftharpoons Z\,.\tag{\mbox{$\mathcal{R}_{10}$}}
\end{equation}
Intuition suggests that sufficiently large inflow-outflow rates for $Z$ should effectively hold $Z$ constant and thus the net effect is effectively only to modify the rates of the reactions in which $Z$ participates. This argument is made precise in Theorem~\ref{thmnewwithopen}. 

{\bf Modification 11: splitting a reaction and inserting a complex involving a new species often preserves MPNE.} Suppose that we modify an irreversible reaction of $\mathcal{R}_0$ by adding a complex containing some new species $Z$, as an intermediate. For example we might get:
\begin{equation}
X+2Y \rightarrow X+Z \rightarrow 3Y\,, \quad Y \rightarrow X\,.\tag{\mbox{$\mathcal{R}_{11}$}}
\end{equation}
The evolution now occurs in $\mathbb{R}^3$, and the stoichiometric subspace is now $2$-dimensional. $\mathcal{R}_{11}$ admits MPNE as a consequence of a general result (Theorem~\ref{thmintermediates}) which allows adding several new species in several intermediate complexes.

{\bf Modification 12: splitting a reaction and inserting a complex involving no new species may destroy MPNE.} Suppose that we modify an existing irreversible reaction of $\mathcal{R}_0$ by adding a new complex comprised of existing species as an intermediate. For example we might get:
\begin{equation}
X+2Y \overset{\scriptstyle{k_1}}\longrightarrow 2X \overset{\scriptstyle{k_2}}\longrightarrow 3Y\,, \quad Y \overset{\scriptstyle{k_3}}\longrightarrow X\,.\tag{\mbox{$\mathcal{R}_{12}$}}
\end{equation}
The evolution occurs in $\mathbb{R}^2$, and the stoichiometric subspace is now $2$-dimensional. The reader may confirm that the only positive equilibrium of $\mathcal{R}_{12}$ is at
$x = \sqrt[3]{\frac{k_3^2}{k_1k_2}}, y=\sqrt[3]{\frac{k_3k_2}{k_1^2}}$. Thus $\mathcal{R}_{12}$ forbids MPNE.

{\bf Remarks on the modifications}

\begin{enumerate}[align=left,leftmargin=*]
\item If we add new ``dependent'' reactions, as in $\mathcal{R}_{1}$, then we can quite generally expect MPNE and MPSE to survive (Theorem~\ref{thmnewdepreac}). This is also the subject of Theorem~3.1 in \cite{joshishiu}.

\item If we add new ``independent'' reactions on existing species, as in Modifications 2--5, then the consequences are unclear as illustrated by $\mathcal{R}_2$, $\mathcal{R}_3$ and $\mathcal{R}_4$. However, MPNE and MPSE are preserved in the important and well known special case of taking the fully open extension as in $\mathcal{R}_{5}$ (Theorem~\ref{thmopenextension}). 
\item If we add species into reactions, without adding new reactions, then MPNE and MPSE are preserved if the added species figure trivially in reactions (Theorem~\ref{thmtrivial}). More generally, the consequences are unclear as illustrated by $\mathcal{R}_{7}$ and $\mathcal{R}_{8}$.
\item If we add new species to a CRN, then MPNE and MPSE survive in the important special case where we add an inflow and outflow reaction for each new species as in $\mathcal{R}_{10}$ (Theorem~\ref{thmnewwithopen}, and see also Theorem~4.2 in \cite{joshishiu}). If we add new reversible reactions involving some new species, without modifying any existing reactions (as in $\mathcal{R}_{9}$) then, with mild additional hypotheses, MPNE and MPSE survive (Theorem~\ref{thmblockadd}). 

\item Finally, $\mathcal{R}_{11}$ and $\mathcal{R}_{12}$ provide examples where we modify a network by adding intermediate complexes into reactions, effectively both deleting and adding reactions. If these involve new species, then under mild conditions, this preserves MPNE and MPSE, as in $\mathcal{R}_{11}$ (Theorem~\ref{thmintermediates}). The addition of intermediate complexes is also the subject of \cite{feliuwiufInterface2013}, and we comment later on the relationship with results in this paper. 
\end{enumerate}

\section{Results on the inheritance of MPNE and MPSE}
\label{secthms}

We gather together the main results, presenting the proofs later. Theorems~\ref{thmnewdepreac}-\ref{thmnewwithopen} are relatively easy and some elements of these results are well known. Theorems~\ref{thmblockadd}~and~\ref{thmintermediates} are harder. In each case, $\mathcal{R}$ is an unknown CRN on species $X = (X_1, \ldots, X_n)$ with positive general kinetics or mass action kinetics. The reactions of $\mathcal{R}$ are assumed, w.l.o.g., to be irreversible. $\mathcal{R}'$ is created from $\mathcal{R}$ by adding reactions and/or species (Theorems~\ref{thmnewdepreac}~to~\ref{thmblockadd}), or by splitting reactions (Theorem~\ref{thmintermediates}), and is given kinetics compatible with that of $\mathcal{R}$: if $\mathcal{R}$ has MA kinetics, then so does $\mathcal{R}'$, while if $\mathcal{R}$ has positive general kinetics, then so does $\mathcal{R}'$. Much more general assumptions on the kinetics are possible but are omitted for brevity: the proofs and surrounding remarks make clear how the kinetic assumptions can be loosened.

Theorem~\ref{thmnewdepreac} is closely related to Theorem~3.1 in \cite{joshishiu}.
\begin{thm}[Adding a dependent reaction]
\label{thmnewdepreac}
Suppose we create $\mathcal{R}'$ from $\mathcal{R}$, by adding to $\mathcal{R}$ a new irreversible reaction with reaction vector $\alpha$ which is a linear combination of reaction vectors of $\mathcal{R}$. If $\mathcal{R}$ admits MPNE (resp., MPSE) then $\mathcal{R}'$ admits MPNE (resp., MPSE).
\end{thm}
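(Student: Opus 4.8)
The plan is to use the implicit function theorem to show that a positive nondegenerate equilibrium of $\mathcal{R}$ persists, as a nondegenerate equilibrium, when we add the dependent reaction with a small rate constant. Write the rate vector of $\mathcal{R}$ as $v$ and let $\alpha = \Gamma c$ for some $c \in \mathbb{R}^{r_0}$, where $\Gamma$ is the stoichiometric matrix of $\mathcal{R}$; so $\mathrm{im}\,\Gamma' = \mathrm{im}\,\Gamma$ and the stoichiometry classes are unchanged. For the new reaction assign a rate $\epsilon g(x)$ where $g$ is a fixed admissible rate function (a monomial, in the MA case) and $\epsilon \geq 0$ is a parameter. The perturbed system is $\dot x = \Gamma v(x) + \epsilon \alpha g(x) = \Gamma(v(x) + \epsilon c\, g(x))$. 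At $\epsilon = 0$ this is exactly $\mathcal{R}$, which by hypothesis has a positive nondegenerate equilibrium $p$, meaning $J_{\mathrm{im}\,\Gamma}(\Gamma Dv(p)) \neq 0$.

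The key step is to set up the IFT correctly on a stoichiometry class rather than on all of $\mathbb{R}^n$. Using the notation of Definition~\ref{notationreddet}, pick $\Gamma_0$ a matrix whose columns are a basis of $\mathrm{im}\,\Gamma$ and write $\Gamma = \Gamma_0 Q$. Define $\Phi\colon \mathbb{R}^k \times \mathbb{R} \to \mathbb{R}^k$ by $\Phi(y,\epsilon) = Q\bigl(v(p + \Gamma_0 y) + \epsilon\, c\, g(p+\Gamma_0 y)\bigr)$; then zeros of $\Phi(\cdot,\epsilon)$ near $y=0$ correspond exactly to equilibria of the perturbed CRN in the stoichiometry class through $p$. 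We have $\Phi(0,0) = Q v(p) = 0$ (since $\Gamma v(p) = \Gamma_0 Q v(p) = 0$ and $\Gamma_0$ has full column rank), and $D_y\Phi(0,0) = Q\, Dv(p)\, \Gamma_0$, whose determinant is precisely $J_{\mathrm{im}\,\Gamma}(\Gamma Dv(p)) \neq 0$ by the hypothesis and the remark following Definition~\ref{notationreddet}. So the IFT yields a $C^1$ branch $y(\epsilon)$ with $y(0)=0$ and, for $\epsilon$ small, $p_\epsilon := p + \Gamma_0 y(\epsilon) \gg 0$ is an equilibrium of $\mathcal{R}'$; moreover $D_y\Phi(y(\epsilon),\epsilon)$ is the reduced Jacobian of $\mathcal{R}'$ at $p_\epsilon$ (here crucially $\mathrm{im}\,\Gamma' = \mathrm{im}\,\Gamma$, so ``reduced'' means the same thing for both networks), and by continuity of the determinant it stays nonzero, so $p_\epsilon$ is nondegenerate for $\mathcal{R}'$. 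Doing this simultaneously for the two equilibria $p, q$ of $\mathcal{R}$ and choosing $\epsilon$ small enough to work for both gives two compatible positive nondegenerate equilibria $p_\epsilon, q_\epsilon$ of $\mathcal{R}'$ (compatibility is automatic since both lie in cosets of the unchanged stoichiometric subspace, and $p-q \in \mathrm{im}\,\Gamma$ forces $p_\epsilon - q_\epsilon \in \mathrm{im}\,\Gamma$ for small $\epsilon$ by continuity — or one simply works in a single fixed stoichiometry class from the start).

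For the MPSE statement, the same branches work: linear stability means the reduced Jacobian $D_{\mathrm{im}\,\Gamma}(\Gamma Dv(p))$ is Hurwitz, the reduced Jacobian of $\mathcal{R}'$ at $p_\epsilon$ depends continuously on $\epsilon$, and the Hurwitz property is open, so it persists for small $\epsilon$; likewise at $q$. The main obstacle — really the only subtlety — is bookkeeping around the reduced Jacobian: one must check that the matrix produced by the IFT as $D_y\Phi$ genuinely represents the operator $D_{\mathrm{im}\,\Gamma'}(\Gamma' Dv'(\cdot))$ whose spectrum governs nondegeneracy and stability of $\mathcal{R}'$, and this hinges on the fact that adding a dependent reaction leaves the stoichiometric subspace, hence the ambient space of the reduced dynamics, unchanged. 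In the MA case one should also note that $g$ can be taken to be any monomial so the construction stays within MA kinetics, and remark that the argument is unchanged for positive general kinetics. Everything else is continuity of determinants and eigenvalues.
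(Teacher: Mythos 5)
Your proposal is correct and follows essentially the same route as the paper: give the new dependent reaction a rate scaled by a small parameter $\epsilon$, observe that the stoichiometric subspace (hence the stoichiometry classes and the meaning of the reduced Jacobian) is unchanged, apply the implicit function theorem on the fixed stoichiometry class using nonsingularity of $QDv(p)\Gamma_0$, and conclude nondegeneracy and Hurwitz stability of the perturbed equilibria by continuity of the determinant and of eigenvalues. The only cosmetic differences are that the paper packages the on-class IFT step as a separate corollary and defines $c$ via $\alpha=\Gamma_0 c$ rather than $\alpha=\Gamma c$, and note that compatibility of $p_\epsilon$ and $q_\epsilon$ is immediate from your construction $p+\Gamma_0 y(\epsilon)$, $q+\Gamma_0\tilde y(\epsilon)$ (no continuity argument needed).
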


\begin{remark}[Adding the reverse of a reaction or a reversible reaction] 
\label{newdeprev}
From Theorem~\ref{thmnewdepreac}, we could add the reverse of any existing reaction to $\mathcal{R}$ as this would be a dependent reaction. Equally, we could add a new {\em reversible} dependent reaction to $\mathcal{R}$ and preserve MPNE, as this amounts to sequentially adding two new irreversible reactions to $\mathcal{R}$. 
\end{remark}

The next theorem is well known. It appears as Theorem~2 in \cite{craciun2} proved using an approach similar to the one here, and as Lemma~B1 in \cite{banajicraciun2} proved using techniques involving the invariance of Brouwer degree, and closely related to the proof of Proposition~1 in \cite{soule}. In the case of MA kinetics or positive general kinetics adding inflows and outflows of all species, as in Theorem~\ref{thmopenextension}, is equivalent to taking the fully open extension of a CRN, and so the theorem may be stated simply as ``taking fully open extensions preserves the capacity for MPNE and MPSE''.

\begin{thm}[Adding inflows and outflows of all species]
\label{thmopenextension}
Suppose we create $\mathcal{R}'$ from $\mathcal{R}$, by adding to $\mathcal{R}$ the reactions $0 \rightleftharpoons X_i$ for $i$ in $1, \ldots, n$. If $\mathcal{R}$ admits MPNE (resp., MPSE) then $\mathcal{R}'$ admits MPNE (resp., MPSE).
\end{thm}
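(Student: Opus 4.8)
The plan is to leave the kinetics of $\mathcal{R}$ untouched and to switch on the new inflows and outflows with strengths governed by a single small parameter, chosen so that one of the original equilibria survives exactly and the other survives as a small perturbation. Suppose $\mathcal{R}$, with rate vector $v$, has compatible positive equilibria $p\neq q$, i.e. $\Gamma v(p)=\Gamma v(q)=0$ and $q-p\in\mathrm{im}\,\Gamma$, each nondegenerate (resp. linearly stable w.r.t. its stoichiometry class). For a parameter $\lambda>0$ I would give $\mathcal{R}'$ the inflow $0\to X_i$ at constant rate $\lambda p_i$ and the outflow $X_i\to 0$ at rate $\lambda x_i$; these are admissible mass action rates (and admissible positive general kinetics rates), and with the reactions of $\mathcal{R}$ retaining $v$ the governing ODE of $\mathcal{R}'$ is $\dot x=f_\lambda(x):=\Gamma v(x)+\lambda(p-x)$, whose stoichiometric subspace is all of $\mathbb{R}^n$.

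First one notes that any zero of $f_\lambda$ satisfies $\lambda(p-x)=-\Gamma v(x)\in\mathrm{im}\,\Gamma$, so all equilibria of $f_\lambda$ lie on $p+\mathrm{im}\,\Gamma$, an affine space invariant under $f_\lambda$ and, by compatibility, containing both $p$ and $q$. Restricting $f_\lambda$ to it, the induced Jacobian at a point $x$ is exactly $D_{\mathrm{im}\,\Gamma}(\Gamma Dv(x))-\lambda\,\mathrm{Id}$, which at $\lambda=0$ reduces to $D_{\mathrm{im}\,\Gamma}(\Gamma Dv(x))$ — invertible at $p$ and at $q$ by the nondegeneracy hypothesis. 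Hence the implicit function theorem yields, for all small $\lambda\geq 0$, an equilibrium $q_\lambda$ of $f_\lambda$ on $p+\mathrm{im}\,\Gamma$ with $q_\lambda\to q$ as $\lambda\to 0$; and since $f_\lambda(p)=0$ identically, $p$ itself is the equilibrium near $p$. For $\lambda$ small enough $q_\lambda\gg 0$ and $q_\lambda\neq p$, so $\mathcal{R}'$ has two distinct compatible positive equilibria.

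It then remains to check that $p$ and $q_\lambda$ are nondegenerate (resp. linearly stable) for $\mathcal{R}'$, which now refers to the full Jacobian $Df_\lambda(x)=\Gamma Dv(x)-\lambda I$ on $\mathbb{R}^n$, since $\mathrm{im}\,\Gamma'=\mathbb{R}^n$. The device here is that whenever $D_{\mathrm{im}\,\Gamma}(\Gamma Dv(x))$ is invertible one has $\mathbb{R}^n=\mathrm{im}\,\Gamma\oplus\ker(\Gamma Dv(x))$, so in a basis adapted to this splitting $\Gamma Dv(x)$ is block diagonal with blocks $D_{\mathrm{im}\,\Gamma}(\Gamma Dv(x))$ and a zero block of size $(n-r)\times(n-r)$; thus the spectrum of $\Gamma Dv(x)-\lambda I$ is that of $D_{\mathrm{im}\,\Gamma}(\Gamma Dv(x))-\lambda\,\mathrm{Id}$ together with $-\lambda$ of multiplicity $n-r$. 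Applying this at $p$ and at $q_\lambda$ (where the reduced Jacobian is still invertible for small $\lambda$, by continuity): if the reduced Jacobians at $p,q$ are merely invertible the shifted spectra avoid $0$, giving nondegeneracy; if they are Hurwitz the shifted spectra stay in the open left half plane, giving linear stability. Hence $\mathcal{R}'$ admits MPNE (resp. MPSE).

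Most of this is routine — the IFT step and checking the chosen flow rates obey the kinetic constraints. The step that repays care is the last one: passing from the reduced Jacobian of $\mathcal{R}$ to the full $n\times n$ Jacobian of $\mathcal{R}'$. This is exactly where nondegeneracy (not merely the presence of a second equilibrium) enters, and it is also what lets the argument produce MPSE and not just MPNE; for MPNE alone it collapses to the observation that $\det(\Gamma Dv(x)-\lambda I)$ behaves like a nonzero constant times $\lambda^{\,n-r}$ as $\lambda\to 0^{+}$.
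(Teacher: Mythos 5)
Your proof is correct and follows essentially the same route as the paper's: switch on the flows with an $\epsilon$-small rate anchored at a point of the original stoichiometry class (the paper uses an arbitrary $\hat{x}\in S_p$ where you take $\hat x=p$, which has the pleasant side effect that $p$ remains an exact equilibrium), apply the implicit function theorem on that class, and then read off the full spectrum as the reduced spectrum shifted by $-\lambda$ together with $-\lambda$ repeated $n-r$ times. The paper realises this last step via the block upper triangular Jacobian in coordinates adapted to $\mathrm{im}\,\Gamma_0\oplus\mathrm{im}\,T$ rather than your pointwise splitting $\mathbb{R}^n=\mathrm{im}\,\Gamma\oplus\ker(\Gamma Dv(x))$, but the spectral conclusion and the rest of the argument are the same.
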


The following theorem is very easy to prove, and yet surprisingly important in applications when combined with other results in this paper.

\begin{thm}[Adding a trivial species]
\label{thmtrivial}
Suppose we create $\mathcal{R}'$ from $\mathcal{R}$, by adding into some reactions of $\mathcal{R}$ a new species $Y$ which occurs with the same stoichiometry on both sides of each reaction in which it participates. If $\mathcal{R}$ admits MPNE (resp., MPSE) then $\mathcal{R}'$ admits MPNE (resp., MPSE).
\end{thm}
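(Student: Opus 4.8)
The plan is to produce, from data witnessing MPNE (resp.\ MPSE) for $\mathcal{R}$, corresponding data for $\mathcal{R}'$ by the trivial device of pinning the new species at concentration $1$. Write $\Gamma$ for the stoichiometric matrix of $\mathcal{R}$ and let $v$ be a rate vector in the given kinetic class with positive, compatible equilibria $p, q \gg 0$ that are nondegenerate (resp.\ linearly stable w.r.t.\ their stoichiometry class). Since $Y$ occurs with the same stoichiometry $m_j \geq 0$ on both sides of reaction $j$, its net change in that reaction is $0$; hence the stoichiometric matrix $\Gamma'$ of $\mathcal{R}'$ is just $\Gamma$ with an appended zero row (the $Y$-row), so $\mathrm{im}\,\Gamma' = \mathrm{im}\,\Gamma \times \{0\}$ and $\dot y = 0$ along solutions of $\mathcal{R}'$. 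For the kinetics of $\mathcal{R}'$ I would take $\tilde v_j(x,y) := v_j(x)\,y^{m_j}$; this lies in the same kinetic class as $v$, since for mass action it is again mass action with the same rate constants (the source complex of reaction $j$ in $\mathcal{R}'$ carries $Y$ with stoichiometry $m_j$), and for positive general kinetics, multiplying by $y^{m_j}$ preserves positivity on the positive orthant, dependence on reactants only, and --- when $m_j \geq 1$ --- strict monotonicity in the new reactant $y$.

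Next I would set $\tilde p := (p,1)$ and $\tilde q := (q,1)$ in $\mathbb{R}^{n+1}_{\gg 0}$. Because $\tilde v_j(x,1) = v_j(x)$, we get $\Gamma' \tilde v(\tilde p) = (\Gamma v(p),0) = 0$ and likewise at $\tilde q$, so $\tilde p, \tilde q$ are positive equilibria of $\mathcal{R}'$; and $\tilde p - \tilde q = (p-q, 0) \in \mathrm{im}\,\Gamma \times \{0\} = \mathrm{im}\,\Gamma'$, so they are compatible. For nondegeneracy and stability, observe that the Jacobian $D\tilde v(\tilde p)$ is $Dv(p)$ with one extra column appended (the $y$-partials, whose $j$th entry is $m_j v_j(p)$). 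Choose $\Gamma_0$ with columns a basis of $\mathrm{im}\,\Gamma$ and write $\Gamma = \Gamma_0 Q$; then $\Gamma'_0$, defined as $\Gamma_0$ with an appended zero row, has columns a basis of $\mathrm{im}\,\Gamma'$ and satisfies $\Gamma' = \Gamma'_0 Q$ with the \emph{same} $Q$. By the matrix formula for the reduced Jacobian in Definition~\ref{notationreddet}, the reduced Jacobian matrix of $\mathcal{R}'$ at $\tilde p$ is $Q\,D\tilde v(\tilde p)\,\Gamma'_0 = Q\,Dv(p)\,\Gamma_0$ --- the appended zero row of $\Gamma'_0$ annihilates the appended column of $D\tilde v(\tilde p)$ --- which is exactly the reduced Jacobian matrix of $\mathcal{R}$ at $p$. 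Hence $J_{\mathrm{im}\,\Gamma'}(\Gamma' D\tilde v(\tilde p)) = J_{\mathrm{im}\,\Gamma}(\Gamma Dv(p)) \neq 0$, and $D_{\mathrm{im}\,\Gamma'}(\Gamma' D\tilde v(\tilde p))$ is Hurwitz exactly when $D_{\mathrm{im}\,\Gamma}(\Gamma Dv(p))$ is; the same holds at $\tilde q$, $q$. This yields MPNE (resp.\ MPSE) for $\mathcal{R}'$.

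There is essentially no obstacle: the only thing needing care is the bookkeeping around the identically zero $Y$-row of $\Gamma'$, which is precisely what makes the extra $y$-dependence of $\tilde v$ invisible to the restricted (reduced) dynamics. Indeed, the whole argument can be summarised by the observation that the stoichiometry class of $\mathcal{R}'$ through $(p,1)$ is $\{(x,1) : x \in p + \mathrm{im}\,\Gamma\}$, on which the restricted vector field is $\dot x = \Gamma v(x)$, $\dot y = 0$, a verbatim copy of the restricted dynamics of $\mathcal{R}$ on $p + \mathrm{im}\,\Gamma$. As elsewhere in the paper, it is worth remarking that the same observation shows $\mathcal{R}'$ inherits any hyperbolic invariant set --- in particular any hyperbolic periodic orbit --- of $\mathcal{R}$.
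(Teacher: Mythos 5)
Your proposal is correct and follows essentially the same route as the paper: the same choice of rates $w_j(x,y)=y^{s_j}v_j(x)$, the same equilibria $(p,1)$, $(q,1)$, and the same computation showing the appended zero row of $\Gamma_0'$ kills the extra $y$-column of the Jacobian so that the reduced Jacobian is unchanged. Your closing observation that the restricted dynamics on $\{y=1\}$ is a verbatim copy of that of $\mathcal{R}$ is also made in the paper's Remark~\ref{remkintriv}.
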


The following theorem is closely related to Theorem~4.2 in \cite{joshishiu}:
\begin{thm}[Adding a new species with inflow and outflow]
\label{thmnewwithopen}
Suppose we create $\mathcal{R}'$ from $\mathcal{R}$, by adding into some reactions of $\mathcal{R}$ the new species $Y$ in an arbitrary way, while also adding the new reaction $0 \rightleftharpoons Y$. If $\mathcal{R}$ admits MPNE (resp., MPSE) then $\mathcal{R}'$ admits MPNE (resp., MPSE).
\end{thm}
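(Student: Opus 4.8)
The plan is to use the implicit function theorem, treating the inflow-outflow rates of the new species $Y$ as a large parameter that effectively slaves $y$ to a quasi-steady value. Let $\mathcal{R}$ have stoichiometric matrix $\Gamma$ and rate vector $v(x)$, and suppose $p, q \gg 0$ are the compatible nondegenerate (resp. stable) equilibria witnessing MPNE (resp. MPSE) for $\mathcal{R}$. In $\mathcal{R}'$ the species list becomes $(X_1, \ldots, X_n, Y)$, some reaction rates $v_j$ acquire a dependence on $y$ (through the added reactant copies of $Y$), and we adjoin the reaction pair $0 \rightleftharpoons Y$ with rates $\lambda$ and $\lambda \mu y$ for a large parameter $\lambda > 0$ and a fixed constant $\mu > 0$ to be chosen. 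The governing ODE for $\mathcal{R}'$ is then $\dot x = \Gamma \tilde v(x,y)$, $\dot y = (\text{row of } \Gamma_Y)\tilde v(x,y) + \lambda(1 - \mu y)$, where $\tilde v(x,y)$ restricts to $v(x)$ when $y = 1/\mu$ and where $\Gamma_Y$ records how $Y$'s stoichiometry changes across the (possibly modified) reactions. By choosing $\mu$ appropriately we arrange that $y^* := 1/\mu$ is the value at which $\tilde v(x, y^*) = v(x)$; concretely, since adding $Y$ as a reactant with some stoichiometry to reaction $j$ multiplies its mass-action rate by $y^{m_j}$, setting $y = 1$ (i.e. $\mu = 1$) recovers $v(x)$ exactly, and the stoichiometry of $Y$ on the two sides may differ so $\Gamma_Y$ need not be zero.

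The key step is a singular-perturbation / IFT argument at $\lambda = \infty$. Rescale time or, more cleanly, observe that the equation $0 = (\Gamma_Y)\tilde v(x,y) + \lambda(1 - \mu y)$ can be solved for $y$ as a function of $x$ and $1/\lambda$ near $(x, 1/\lambda) = (p, 0)$: at $1/\lambda = 0$ it forces $y = 1/\mu = 1$, and $\partial/\partial y$ of the left side is $-\lambda\mu + O(1)$, which is nonzero, so the IFT gives a $C^1$ function $y = h(x, 1/\lambda)$ with $h(x, 0) \equiv 1$. Substituting, the reduced system on the $x$-coordinates is $\dot x = \Gamma \tilde v(x, h(x, 1/\lambda)) =: \Gamma V(x, 1/\lambda)$, and $V(x, 0) = \tilde v(x, 1) = v(x)$. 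Since the stoichiometric subspace of this reduced system is $\mathrm{im}\,\Gamma$ — unchanged from $\mathcal{R}$ — and $p, q$ are nondegenerate equilibria of $\dot x = \Gamma v(x)$, a second application of the IFT (exactly as in the proofs of the earlier, easier theorems) perturbs $p$ and $q$ to nearby compatible nondegenerate equilibria $p(1/\lambda)$, $q(1/\lambda)$ of $\dot x = \Gamma V(x, 1/\lambda)$ for all small $1/\lambda$. These lift to equilibria $(p(1/\lambda), h(p(1/\lambda),1/\lambda))$, $(q(1/\lambda), h(q(1/\lambda),1/\lambda))$ of $\mathcal{R}'$; they are positive (both components stay near $p, q, 1$) and compatible in $\mathcal{R}'$ because their difference lies in $\mathrm{im}\,\Gamma \oplus \mathbb{R}e_Y \subseteq \mathrm{im}\,\Gamma'$ (the reaction $0 \to Y$ contributes $e_Y$, so $Y$ lies in every stoichiometry class of $\mathcal{R}'$).

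The remaining point, and the place needing genuine care, is nondegeneracy (and Hurwitz stability) of the lifted equilibria as equilibria of the full $(n+1)$-dimensional system — not just of the reduced system. Here the block structure of the Jacobian of $\mathcal{R}'$ at the lifted point is decisive: in coordinates $(x,y)$ adapted to the slow/fast splitting, the Jacobian has the form $\begin{pmatrix} A & b \\ c & -\lambda\mu + O(1) \end{pmatrix}$ where $A \to \Gamma Dv(p)$ (restricted appropriately) and the bottom-right entry is large and negative. A standard Schur-complement argument shows that for $\lambda$ large the reduced Jacobian $D_{\mathrm{im}\,\Gamma'}(\Gamma' D\tilde v)$ has determinant of the same sign as $J_{\mathrm{im}\,\Gamma}(\Gamma Dv(p))$ times a large negative factor, hence nonzero; and its spectrum splits into $n$ eigenvalues near $\mathrm{spec}(D_{\mathrm{im}\,\Gamma}(\Gamma Dv(p)))$ plus one eigenvalue near $-\lambda\mu$, so Hurwitz stability of $p$ for $\mathcal{R}$ passes to the lifted point for $\mathcal{R}'$. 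I expect this Schur-complement bookkeeping — keeping track of which linear subspace plays the role of the reduced space in $\mathcal{R}'$ and checking the perturbation estimates are uniform — to be the main obstacle; everything else is a routine double application of the implicit function theorem. The argument extends verbatim from mass action to positive general kinetics, since all we used about $\tilde v$ is that it is $C^1$, reduces to $v$ when $y = 1$, and is increasing in $y$ where $Y$ appears as a reactant.
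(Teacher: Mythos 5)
Your proposal is correct and follows essentially the same route as the paper: treat the inflow--outflow rate as a singular perturbation parameter that slaves $y$ to the value at which the modified rates reduce to $v(x)$, apply the implicit function theorem to continue $p$ and $q$, and then handle nondegeneracy and stability via the block structure of the reduced Jacobian with a large negative entry in the $Y$-direction (the eigenvalue-splitting you sketch is exactly what the paper's Lemma~\ref{lemHurwitz} establishes via Rouch\'e's theorem). The only cosmetic difference is that you eliminate $y$ first by a separate application of the IFT and then perturb the $x$-equilibria, whereas the paper rescales the $y$-equation by $\epsilon=1/\lambda$ and applies the IFT once to the combined system; both reduce to the same limiting computation.
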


Theorem~\ref{thmnewwithopen}, combined with Theorem~\ref{thmnewdepreac} allows us to deduce an important corollary whose claim about MPNE appears, although with different terminology, in \cite{joshishiu}.
\begin{cor}[Inheritance of MPNE and MPSE in the induced subnetwork partial order]
\label{coropeninduced}
Let $\mathcal{R}$ and $\mathcal{R}'$ be fully open CRNs with $\mathcal{R}$ an induced subnetwork of $\mathcal{R}'$. If $\mathcal{R}$ admits MPNE (resp., MPSE) then $\mathcal{R}'$ admits MPNE (resp., MPSE).
\end{cor}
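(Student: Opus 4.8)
The plan is to reach $\mathcal{R}'$ from $\mathcal{R}$ by a sequence of moves, each preserving MPNE and MPSE: first introduce the extra species using Theorem~\ref{thmnewwithopen}, then introduce the extra reactions using Theorem~\ref{thmnewdepreac}, exploiting throughout that full openness makes the second batch of reactions automatically ``dependent''. To set up notation, write the species of $\mathcal{R}$ as $X_1,\dots,X_n$ and those of $\mathcal{R}'$ as $X_1,\dots,X_n,Y_1,\dots,Y_m$. Since $\mathcal{R}$ is an induced subnetwork of $\mathcal{R}'$ (its PN graph a vertex-induced subgraph of $PN(\mathcal{R}')$), every reaction vertex of $\mathcal{R}$ is a reaction vertex of $\mathcal{R}'$, and the reaction it carries in $\mathcal{R}$ is obtained from the corresponding reaction of $\mathcal{R}'$ by striking out the species $Y_1,\dots,Y_m$. (Should this striking out turn some reaction of $\mathcal{R}$ into the null reaction $0\to 0$, that reaction has zero reaction vector, plays no dynamical role, and may be deleted at the start and reinstated in the second stage as a dependent reaction; so I will assume no such reactions occur.)

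First I would adjoin the new species one at a time. Suppose inductively that $Y_1,\dots,Y_{j-1}$ have been adjoined, producing a CRN $\mathcal{S}_{j-1}$ which is fully open (on $X_1,\dots,X_n,Y_1,\dots,Y_{j-1}$) and admits MPNE (resp.\ MPSE), with $\mathcal{S}_0=\mathcal{R}$. Insert $Y_j$ into each reaction of $\mathcal{S}_{j-1}$ with exactly the stoichiometric coefficients (possibly zero) that $Y_j$ has in the corresponding reaction of $\mathcal{R}'$, and also adjoin $0\rightleftharpoons Y_j$. By Theorem~\ref{thmnewwithopen} the resulting CRN $\mathcal{S}_j$ admits MPNE (resp.\ MPSE); and it is again fully open, since it inherits $0\rightleftharpoons X_i$ for all $i$ and $0\rightleftharpoons Y_i$ for $i<j$ from $\mathcal{S}_{j-1}$ and we have just added $0\rightleftharpoons Y_j$. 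After $m$ steps, $\mathcal{S}_m$ is a fully open CRN on all $n+m$ species, it admits MPNE (resp.\ MPSE), and its reactions are precisely the reactions of $\mathcal{R}'$ whose vertices lie in $\mathcal{R}$ --- now written with their full $\mathcal{R}'$-stoichiometry --- together with the flow reactions $0\rightleftharpoons Y_j$.

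Then I would add the reactions of $\mathcal{R}'$ that are still missing. Since $\mathcal{S}_m$ is fully open, its stoichiometric subspace $\mathrm{im}\,\Gamma$ is all of $\mathbb{R}^{n+m}$, so every such reaction has a reaction vector lying in $\mathrm{im}\,\Gamma$, i.e.\ it is a dependent reaction relative to the current network. Adjoining these reactions one by one and applying Theorem~\ref{thmnewdepreac} at each step (splitting any reversible reaction into two irreversible ones, cf.\ Remark~\ref{newdeprev}) preserves MPNE (resp.\ MPSE) throughout, and once the list is exhausted the network is exactly $\mathcal{R}'$. This establishes the corollary.

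There is no genuinely hard step here: the content is carried entirely by Theorems~\ref{thmnewwithopen} and \ref{thmnewdepreac}. The only point demanding care is the bookkeeping --- keeping straight which reaction vertices of $\mathcal{R}'$ already appear in $\mathcal{R}$ (and so are recovered automatically once the $Y_j$ are inserted with the right stoichiometry) versus which are genuinely new (and so are added afterwards as dependent reactions), and verifying that every intermediate network really is fully open, so that Theorem~\ref{thmnewdepreac} is applicable at the last stage.
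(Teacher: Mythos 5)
Your proof is correct and takes essentially the same route as the paper's: first adjoin the absent species together with their inflow--outflow reactions and all their incidences via Theorem~\ref{thmnewwithopen}, then adjoin the remaining reactions, which are automatically dependent because the intermediate network is fully open, via Theorem~\ref{thmnewdepreac}. The additional bookkeeping you supply (one species at a time, checking full openness at each stage, and the $0\to 0$ edge case) only makes explicit what the paper's two-step construction leaves implicit.
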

\begin{proof}
We can construct $PN(\mathcal{R}')$ from $PN(\mathcal{R})$ via a sequence of steps as follows: (i) first, for each absent species (if any) we add the species vertex, its inflow and outflow reaction vertices, and all missing arcs (corresponding to Theorem~\ref{thmnewwithopen}); (ii) then, for each remaining absent reaction (if any) we add the reaction vertex and all its incident arcs (corresponding to Theorem~\ref{thmnewdepreac} because a fully open CRN has stoichiometric subspace which is the whole ambient space, and hence each added reaction is now a dependent reaction). If $\mathcal{R}$ admits MPNE (resp., MPSE), then since each step preserves this property, $\mathcal{R}'$ admits MPNE (resp., MPSE).
\end{proof}

\begin{remark}[Notes on Corollary~\ref{coropeninduced}]
The claim about MPNE in Corollary~\ref{coropeninduced} appears as Corollary~4.6 of \cite{joshishiu}. The claim about MPSE is part of Theorem~4.5 in the later work \cite{Joshi.2015aa}. However, it appears that our approach allows considerably weaker kinetic assumptions than needed for Corollary~4.6 of \cite{joshishiu}. Joshi (Theorem 4.13 of \cite{Joshi.2013aa}) showed that the fully open CRNs admitting MPNE, minimal w.r.t. the induced subnetwork partial order, and with only one non-flow reaction are the fully open extensions of either $a_1X\to a_2X$ or $X+Y\to b_1X+b_2Y$,  where $a_2 >a_1 >1$,  respectively $b_1>1$ and $b_2 >1$. Corollary~\ref{coropeninduced} can be improved using Theorem~\ref{thmintermediates} below as indicated in Remark~\ref{remFObest}.

\end{remark}

The next two theorems are somewhat harder than Theorems~\ref{thmnewdepreac}~to~\ref{thmnewwithopen}, and can be regarded as the main results of this paper. 

\begin{thm}[Adding new reversible reactions involving new species]
\label{thmblockadd}
Suppose we create $\mathcal{R}'$ from $\mathcal{R}$, by adding $m \geq 1$ new reversible reactions involving $k$ new species with the following nondegeneracy condition: the submatrix of the stoichiometric matrix of $\mathcal{R}'$ corresponding to the new species has rank $m$ (this implies $k \geq m$). If $\mathcal{R}$ admits MPNE (resp., MPSE) then $\mathcal{R}'$ admits MPNE (resp., MPSE).
\end{thm}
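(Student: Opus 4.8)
The plan is to realise the equilibria of $\mathcal{R}'$ as perturbations of the equilibria of $\mathcal{R}$, using the implicit function theorem to solve for the new-species concentrations. Write the species of $\mathcal{R}'$ as $(X,Y) = (X_1,\dots,X_n,Y_1,\dots,Y_k)$, and arrange the stoichiometric matrix of $\mathcal{R}'$ in block form
\begin{equation*}
\Gamma' = \begin{pmatrix} \Gamma & A_f & A_b \\ 0 & B_f & B_b \end{pmatrix},
\end{equation*}
where $\Gamma$ is the stoichiometric matrix of $\mathcal{R}$, the columns of $A_f,B_f$ (resp.\ $A_b,B_b$) record the net effect of the $m$ new forward (resp.\ backward) reactions on the old and new species, and $B_b = -B_f =: B$. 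The nondegeneracy hypothesis says precisely that the $k\times 2m$ matrix $(B\ \ {-}B)$ has rank $m$, i.e.\ $\mathrm{rank}\,B = m$, so in particular $k\ge m$ and the forward reactions alone span the ``new'' directions. Give the new reactions mass action kinetics (or positive general kinetics); the key structural point is that each new reaction rate depends on $y$ and possibly on $x$, and is strictly increasing in each of its reactant concentrations.

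\emph{First} I would fix nondegenerate compatible equilibria $p,q\gg 0$ of $\mathcal{R}$ guaranteed by MPNE, and seek equilibria of $\mathcal{R}'$ of the form $(p,y^*)$ and $(q,y^*)$ for a single common $y^*\in\mathbb{R}^k_{\gg 0}$, with the rate constants of the new reactions scaled by a small parameter $\varepsilon$. The $y$-component of the equilibrium equations reads $B_f v_f^{\mathrm{new}} + B_b v_b^{\mathrm{new}} = B(v_f^{\mathrm{new}} - v_b^{\mathrm{new}}) = 0$, and the $x$-component reads $\Gamma v(x) + \varepsilon(A_f v_f^{\mathrm{new}} + A_b v_b^{\mathrm{new}}) = 0$. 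The natural route is: (a) choose a fixed target $y^*$ and use reversibility plus monotonicity of the new rates to pick rate constants so that $v_f^{\mathrm{new}} = v_b^{\mathrm{new}}$ at both $(p,y^*)$ and $(q,y^*)$ simultaneously — this is where reversibility is essential, since one can balance forward and backward rates; the subtlety is that $p\ne q$, so the same rate constants must detailed-balance at two different points. \emph{Then}, since at $\varepsilon=0$ the old equations $\Gamma v(p)=\Gamma v(q)=0$ hold and the $y$-equations vanish, I would apply the IFT in $\varepsilon$ around $\varepsilon = 0$ to continue these to genuine equilibria of $\mathcal{R}'$ for small $\varepsilon>0$.

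\emph{Next} comes the nondegeneracy (and stability) bookkeeping. The Jacobian of $\mathcal{R}'$ at such an equilibrium has block-triangular-in-the-limit structure: the $(x,x)$ block tends to $\Gamma Dv(p)$, the $(x,y)$ block is $O(\varepsilon)$, and the $(y,y)$ block is $B\,D_y(v_f^{\mathrm{new}} - v_b^{\mathrm{new}})$, which by the rank-$m$ hypothesis and strict monotonicity of the rates has the reduced determinant over the ``new'' stoichiometric directions nonzero (and, with an appropriate sign, a Hurwitz reduced block). One shows the reduced determinant of $\mathcal{R}'$ over $\mathrm{im}\,\Gamma'$ factors, as $\varepsilon\to 0$, into the reduced determinant of $\mathcal{R}$ over $\mathrm{im}\,\Gamma$ times the determinant of the new $(y,y)$ block restricted to $\mathrm{im}\,B$; both are nonzero, so for small $\varepsilon$ the equilibria $(p,y^*(\varepsilon))$ and $(q,y^*(\varepsilon))$ are nondegenerate, and they remain compatible because the perturbation of $p-q$ stays in $\mathrm{im}\,\Gamma'$. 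For MPSE one instead checks that the limiting block-triangular Jacobian has spectrum the union of the spectrum of $D_{\mathrm{im}\,\Gamma}(\Gamma Dv(p))$ (Hurwitz by hypothesis) and that of the new block (Hurwitz because $B\,D_y(\cdots)$ restricted to $\mathrm{im}\,B$ is, for the natural sign choice of reversible mass-action rates, negative definite up to similarity), and eigenvalue perturbation keeps everything in the open left half-plane.

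\emph{The main obstacle} I anticipate is step (a): simultaneously detailed-balancing the $m$ new reversible reactions at the \emph{two distinct} equilibria $p$ and $q$ (times the chosen $y^*$). With mass action this is a system of $2m$ polynomial constraints on the new rate constants and on $y^*$, and one must exploit the freedom in choosing $y^*\in\mathbb{R}^k_{\gg 0}$ ($k\ge m$ degrees of freedom) together with the forward/backward rate constants; the cleanest resolution is probably to pick $y^*$ and the backward constants so that each new backward rate equals the corresponding new forward rate identically along the segment, or to argue that the map $y^*\mapsto$ (forward-minus-backward rate vector) can be made to vanish at a common point by a dimension count using $\mathrm{rank}\,B = m$. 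A secondary technical point is verifying the factorisation of the reduced determinant respects the possibly nontrivial overlap between $\mathrm{im}\,\Gamma'$ and the old stoichiometric subspace when the new forward reactions also move old species ($A_f\ne 0$); handling this requires choosing the basis matrix $\Gamma_0'$ for $\mathrm{im}\,\Gamma'$ compatibly with the block decomposition, after which the limiting determinant is genuinely block-triangular.
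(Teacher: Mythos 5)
Your overall architecture---the block form of $\Gamma'$, the observation that injectivity of the rank-$m$ matrix $\beta:=b'-b$ forces each new reversible reaction to balance individually at any positive equilibrium, an IFT continuation, and a (block) limit of the reduced Jacobian with a Hurwitz new block---matches the paper's proof in spirit. But the step you yourself flag as the main obstacle is not a removable technicality: it is where the construction breaks, and the fixes you suggest cannot repair it. Writing $\alpha:=a'-a$ for the net effect of the new reactions on the old species, balancing the $i$th new reaction at $(p,y^*)$ and at $(q,y^*)$ under mass action requires $k_i/k_{-i}=p^{\alpha_i}(y^*)^{\beta_i}$ and $k_i/k_{-i}=q^{\alpha_i}(y^*)^{\beta_i}$ simultaneously; dividing, the factor $(y^*)^{\beta_i}$ cancels and you are left with $p^{\alpha_i}=q^{\alpha_i}$, a condition independent of $y^*$ and of all rate constants, which fails whenever the new reactions move old species (e.g.\ $\mathcal{R}_9$, where $Y\rightleftharpoons 2Z$ gives $\alpha\neq 0$). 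So no dimension count in $y^*$ helps. If instead you allow distinct values $y_p^*\neq y_q^*$, compatibility fails: in $\mathcal{R}_9$ the candidate equilibria $(t_+,t_-,z_p)$ and $(t_-,t_+,z_q)$ with $z_{p}\ne z_q$ differ by a vector whose third coordinate cannot be matched within $\mathrm{im}\,\Gamma'$. More fundamentally, pinning the $x$-components at exactly $p$ and $q$ is too rigid: the true equilibria of $\mathcal{R}'$ have $x$-components satisfying $\Gamma v(x)=0$ but drifting off the original stoichiometry class, into $p+\mathrm{im}\,\Gamma+\mathrm{im}\,\alpha$.

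The paper's resolution of exactly this difficulty is to make the rate constants $\epsilon$-dependent, $k_+(\epsilon)=\bm{\epsilon}^{-\hat{b}^\mathrm{t}}$ and $k_-(\epsilon)=\bm{\epsilon}^{-\hat{b}'^\mathrm{t}}$, so that the balancing condition becomes $\hat{y}=\epsilon\, x^{\gamma}\circ(\text{positive factor})$: the first $m$ new concentrations are driven to $0$ as $\epsilon\to 0+$, both limiting equilibria share the \emph{same} $y$-component $(0,\mathbf{1})$ (hence are automatically compatible), Corollary~\ref{corift} is applied at this boundary point within the cosets $(p,0,\mathbf{1})+S'$ and $(q,0,\mathbf{1})+S'$, and positivity for $\epsilon>0$ is recovered from the explicit formula for $\hat{y}(\epsilon)$. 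Note also that this scaling makes the new diagonal block of the reduced Jacobian blow up like $1/\epsilon$, whereas your normalisation (new rates multiplied by $\varepsilon$) makes it vanish, so the limiting reduced Jacobian is singular and continuity of the determinant proves nothing; in either regime one must rescale, which is what Lemma~\ref{lemnondegen} (via Cauchy--Binet) and the singular-perturbation Lemma~\ref{lemHurwitz} do for nondegeneracy and stability respectively. Your Jacobian heuristics are broadly in the right direction, but without a correct construction of a compatible pair of positive equilibria the proof does not get off the ground.
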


\begin{thm}[Adding intermediate complexes involving new species]
\label{thmintermediates}
Let $Y$ be a list of $k$ new species, and suppose we create $\mathcal{R}'$ from $\mathcal{R}$ by replacing each of the  $m$ reactions:
\[
a_i \cdot X \rightarrow b_i \cdot X \mbox{with a chain}\quad a_i \cdot X \rightarrow c_i \cdot X + \beta_i\cdot Y \rightarrow b_i \cdot X,\,\,(i=1,\ldots,m)\,.\]
Suppose further that the new species $Y$ enter nondegenerately into $\mathcal{R}'$ in the sense that $\beta : = (\beta_1|\beta_2|\cdots|\beta_m)$ has rank $m$ (this implies $k \geq m$). $a_i$, $b_i$ and $c_i$ are arbitrary nonnegative vectors and any or all may coincide. If $\mathcal{R}$ admits MPNE (resp., MPSE) then $\mathcal{R}'$ admits MPNE (resp., MPSE).
\end{thm}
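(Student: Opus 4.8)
The plan is to realise $\mathcal{R}'$ from a realisation of MPNE (resp.\ MPSE) in $\mathcal{R}$, and to use implicit-function and degree arguments to tie up the remaining freedom. Write the reactions of $\mathcal{R}$ being split as $R_i\colon a_i\cdot X\to b_i\cdot X$ for $i=1,\dots,m$; in $\mathcal{R}'$ each $R_i$ is replaced by $R_i^a\colon a_i\cdot X\to c_i\cdot X+\beta_i\cdot Y$ followed by $R_i^b\colon c_i\cdot X+\beta_i\cdot Y\to b_i\cdot X$, and all other reactions are kept. The reaction vectors of $R_i^a$ and $R_i^b$ are $u_i:=\binom{c_i-a_i}{\beta_i}$ and $\binom{b_i-c_i}{-\beta_i}$ in $\mathbb{R}^{n+k}$, and these sum to $\binom{b_i-a_i}{0}$, the reaction vector of $R_i$ padded with zeros; hence the stoichiometric subspace of $\mathcal{R}'$ is $\mathrm{im}\,\Gamma'=(\mathrm{im}\,\Gamma\times\{0_k\})\oplus\mathrm{span}\{u_1,\dots,u_m\}$. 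The hypothesis $\mathrm{rank}\,\beta=m$ is precisely what makes the $u_i$ linearly independent, and independent from $\mathrm{im}\,\Gamma\times\{0_k\}$, so $\dim\mathrm{im}\,\Gamma'=\dim\mathrm{im}\,\Gamma+m$.

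Give $\mathcal{R}'$ kinetics as follows: the unchanged reactions keep their rates; $R_i^a$ is given the same rate function $v_i$ that $R_i$ carried in $\mathcal{R}$ (legitimate, since $R_i^a$ and $R_i$ have the same reactant complex; for mass action this means the same rate constant); and $R_i^b$ is given a rate $v_i^b(x,y)$ of the prescribed class, with constants to be tuned (for mass action, $v_i^b=k_i^b\,x^{c_i}y^{\beta_i}$). The key structural fact is that at \emph{any} equilibrium $(x,y)$ of $\mathcal{R}'$ one has $\dot y=\beta\,(v^a(x)-v^b(x,y))=0$, and since $\mathrm{rank}\,\beta=m$ this forces $v_i^a(x)=v_i^b(x,y)$ for all $i$; substituting back, the chain $R_i^a,R_i^b$ contributes to $\dot x$ exactly what $R_i$ contributed in $\mathcal{R}$, so $x$ solves the equilibrium equation of $\mathcal{R}$. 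Conversely $(p,y_p)$ is an equilibrium of $\mathcal{R}'$ iff $p$ is an equilibrium of $\mathcal{R}$ and $v_i^b(p,y_p)=v_i(p)$ for all $i$. Taking logarithms, this last condition reads $\beta^\top\log y_p=\rho(p)$ for an explicit vector $\rho(p)\in\mathbb{R}^m$; since $\mathrm{rank}\,\beta=m$, $\beta^\top$ is onto, so there is a $(k-m)$-parameter family of positive $y_p$ solving it, and likewise a family of $y_q$ for the second equilibrium $q$ of $\mathcal{R}$. Thus the two equilibria $p,q$ realising MPNE in $\mathcal{R}$ lift to equilibria of $\mathcal{R}'$; what must still be arranged is that the lifts be \emph{compatible} in $\mathcal{R}'$.

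This is the crux. By the first paragraph, $(p,y_p)-(q,y_q)\in\mathrm{im}\,\Gamma'$ iff there is $\lambda\in\mathbb{R}^m$ with $y_p-y_q=\beta\lambda$ and $p-q-\sum_i\lambda_i(c_i-a_i)\in\mathrm{im}\,\Gamma$. I would play off the available freedom: the $m$ new rate constants $k_i^b$; the $(k-m)$-dimensional fibre of admissible choices for each of $y_p$ and $y_q$; and — exploiting that nondegeneracy of $p$ and $q$ makes the equilibrium set of $\mathcal{R}$ a manifold transverse to $\mathrm{im}\,\Gamma$ near each, so that the pair $(p,q)$ may be moved within $\mathcal{R}$'s equilibrium set (for the given, or a nearby, realisation) while steering the residue of $p-q$ modulo $\mathrm{im}\,\Gamma$ freely — the positions of $p$ and $q$ themselves. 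A dimension count is favourable (roughly $m+2(k-m)$ parameters from the new constants and the fibres, plus those from moving $(p,q)$, against the $(k-m)$ equations putting $y_p-y_q$ into $\mathrm{im}\,\beta$ and the at most $n-\dim\mathrm{im}\,\Gamma$ equations for the residue of $p-q$, with $k\ge m$ from the rank hypothesis). The substantive step is to verify that the resulting map is onto near a suitable configuration: in the simplest instances this follows because the map $\gamma\mapsto$ (the component of $\exp(\mathrm{const}+\gamma)$ transverse to $\mathrm{im}\,\beta$), with $\gamma$ ranging over $\ker\beta^\top$, is a proper local diffeomorphism onto $\mathbb{R}^{k-m}$, hence surjective; the general case is a more careful version of the same, and this transversality is where the real work lies. (The argument runs parallel to that required for Theorem~\ref{thmblockadd}.)

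It remains to check that the constructed lifts $P=(p,y_p)$ and $Q=(q,y_q)$ are nondegenerate (resp.\ linearly stable). I would work in the regime where the $R_i^b$ are fast, i.e.\ the new constants $k_i^b$ are large, which forces $y_p,y_q$ small — convenient also for the previous step, as it makes $\beta\lambda$ small. Then $\mathcal{R}'$ is a slow-fast system whose fast variables are the $Y$-concentrations, whose slow set is $\{\,v_i^a(x)=v_i^b(x,y)\ \forall i\,\}$ — which can be solved for $y$ in terms of $x$ because, by $\mathrm{rank}\,\beta=m$ and strict monotonicity of $v_i^b$ in the $Y$-species, the relevant partial Jacobian is invertible — and whose slow dynamics is exactly $\mathcal{R}$ with kinetics $v$. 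A Schur complement computation then shows that, for $k_i^b$ large, the reduced Jacobian $D_{\mathrm{im}\,\Gamma'}(\Gamma'Dv'(P))$ of Definition~\ref{notationreddet} has $m$ eigenvalues of large negative real part — arising from the fast block, which equals $-\beta\,\mathrm{diag}(\,\cdot\,)\,\beta^\top(\,\cdot\,)$ restricted to $\mathrm{im}\,\beta$ and is Hurwitz by the rank condition — together with the remaining eigenvalues converging to the spectrum of $D_{\mathrm{im}\,\Gamma}(\Gamma Dv(p))$, which is nonzero, and in the MPSE case lies in the open left half-plane, by hypothesis. Hence $P$, and likewise $Q$, is nondegenerate (resp.\ linearly stable), so $\mathcal{R}'$ admits MPNE (resp.\ MPSE). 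For positive general kinetics the only change is to replace the explicit monomial manipulations above by the corresponding monotonicity and intermediate-value arguments.
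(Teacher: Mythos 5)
Your kinetic setup is exactly the paper's (give $R_i^a$ the rate $v_i$ of the original reaction, give $R_i^b$ a tunable mass action rate), and your reduction of the equilibrium equations is correct: rank $\beta=m$ forces $v_i^b(x,y)=v_i(x)$ at equilibrium, so $x$ must be an equilibrium of $\mathcal{R}$ and $y$ ranges over a $(k-m)$-parameter fibre. The closing slow--fast/Schur-complement argument for nondegeneracy and stability is also in the spirit of the paper's Lemma~\ref{lemnondegen} and Lemma~\ref{lemHurwitz}. But there is a genuine gap at precisely the step you flag as the crux: compatibility of the lifts. You reduce it to finding $\lambda$ with $y_p-y_q=\beta\lambda$ and $p-q-\sum_i\lambda_i(c_i-a_i)\in\mathrm{im}\,\Gamma$, and then propose to solve this by a dimension count over the free parameters ($k_i^b$, the two fibres, motion of $(p,q)$ along the equilibrium manifold), explicitly deferring the surjectivity/transversality verification. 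That verification is where the whole difficulty sits --- the constraints couple $\lambda$, which enters multiplicatively through $\beta^{\mathrm{t}}\log y=\rho$, to an additive membership condition in $\mathrm{im}\,\Gamma$, subject to positivity --- and it is not carried out. Note that the simplest resolution, $y_p=y_q$ (so $\lambda=0$), fails in general since it would force $v_i(p)/p^{c_i}=v_i(q)/q^{c_i}$ for all $i$. The promised ``degree arguments'' of your opening sentence never appear.

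The paper sidesteps this problem rather than solving it head-on. After splitting $y=(\hat y,\doublehat{y})$ with $\hat\beta$ invertible, it chooses the one-parameter family of rate constants $k=k(\epsilon):=\bm{\epsilon}^{-\hat\beta^{\mathrm{t}}}$, so that the equilibrium condition becomes $\hat y=\epsilon V(x)\circ x^\gamma\circ\doublehat{y}^{\delta}$ and the lifted equilibria converge, as $\epsilon\to 0+$, to $(p,0,\mathbf{1})$ and $(q,0,\mathbf{1})$ --- points with \emph{identical} $y$-coordinates, whose difference $(p-q,0,0)$ lies in $\mathrm{im}\,\Gamma'$. Corollary~\ref{corift} (the IFT on a coset) then produces the perturbed equilibria $p_\epsilon$, $q_\epsilon$ on this single common coset of $\mathrm{im}\,\Gamma'$, so compatibility is exact and automatic for all small $\epsilon>0$; no transversality or parameter-matching is needed. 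If you adopt this scaling you also obtain, for free, the asymptotic regime your stability argument requires --- though note the scaling is anisotropic in $\hat y$ versus $\doublehat{y}$, which is why the limiting fast block $N_0=-N_1\beta^{\mathrm{t}}N_2\beta$ involves a merely nonnegative diagonal $N_2$ and its nonsingularity requires the Cauchy--Binet computation rather than the clean positive-definite form you assert.
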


\begin{remark}[Scope of Theorem~\ref{thmintermediates}]
Since we do not impose the assumptions in Remark~\ref{remforbid} we may always formally write a single reaction as multiple reactions; for example we way consider the single reaction $a \cdot X \rightarrow b\cdot X$ as a set of $m$ such reactions, each with rate $\frac{1}{m}$ times the original rate. This clearly does not affect the associated dynamics and can be done while remaining in any reasonable class of kinetics (in particular mass action kinetics, where each new reaction has rate constant which is the original rate constant divided by $m$). Thus in Theorem~\ref{thmintermediates} some reactions may be split multiple times and acquire multiple intermediates. Many (but not all) instances of Theorem~\ref{thmblockadd} also follow from Theorem~\ref{thmintermediates}. For example, adding to a CRN a trivial reaction with the same complex on both sides (with any kinetics) does not alter the vector field. Given a CRN with MPNE, we could add and then split such a trivial reaction using Theorem~\ref{thmintermediates} to create a new reversible reaction involving some new species. The resulting CRN admits MPNE by Theorem~\ref{thmintermediates}. 
\end{remark}

\begin{remark}[Relationship between Theorem~\ref{thmintermediates} and results in \cite{feliuwiufInterface2013}]
Theorem~\ref{thmintermediates} appears to generalise certain aspects of results in \cite{feliuwiufInterface2013}, while being unable to reproduce others. In particular, we allow the introduction of complexes (possibly involving existing species) rather than just new individual species as intermediates; however we forbid the introduction of intermediates in a degenerate way allowed in \cite{feliuwiufInterface2013} (although see the remarks in the conclusions). This divergence is not surprising as the techniques in \cite{feliuwiufInterface2013} are global and algebraic, while those here are local and analytical.
\end{remark}

Theorems~\ref{thmnewdepreac}~to~\ref{thmintermediates} collectively define a partial order $\preceq$ on the set of all CRNs as follows: $\mathcal{R}_1 \preceq \mathcal{R}_2$ if and only if $\mathcal{R}_2$ can be obtained from $\mathcal{R}_1$ by a sequence of modifications (possibly empty) as described in Theorems~\ref{thmnewdepreac}~to~\ref{thmintermediates}. Reflexivity and transitivity are trivial; antisymmetry is ensured by the fact that each modification either increases the number of species or the number of reactions or both. So, for example, the CRNs $\mathcal{R}_1$, $\mathcal{R}_5$, $\mathcal{R}_6$, $\mathcal{R}_9$, $\mathcal{R}_{10}$ and $\mathcal{R}_{11}$ in Section~\ref{secextended} are all $\succ$ $\mathcal{R}_0$.

\begin{remark}[Improving Corollary~\ref{coropeninduced}]
\label{remFObest}
Theorem~\ref{thmintermediates} allows an improvement on Corollary~\ref{coropeninduced}. Given a fully open CRN $\mathcal{R}$ admitting MPNE (resp., MPSE), we may split some reactions as in Theorem~\ref{thmintermediates} to get a CRN $\mathcal{R}'$ admitting MPNE (resp., MPSE). $\mathcal{R}'$ will have some new species, say $Y$, but the stoichiometric matrix of $\mathcal{R}'$ has rank equal to the number of species in $\mathcal{R}'$ by the rank condition in Theorem~\ref{thmintermediates}. We may now add inflows and outflows of the species $Y$ to get the fully open extension $\mathcal{R}''$ of $\mathcal{R}'$. By Theorem~\ref{thmnewdepreac}, $\mathcal{R}''$ admits MPNE (resp., MPSE) since the added reactions were dependent reactions. Now $\mathcal{R}''$ may be a minimal CRN admitting MPNE (resp., MPSE) in the induced subnetwork partial order, but by construction is not minimal w.r.t. the partial order $\preceq$ here. For example, according to Theorem~4.13 of \cite{Joshi.2013aa}, the CRN $X \rightleftharpoons 0, \,\,2X\rightarrow 3X$ with MA kinetics admits MPNE and consequently, by Theorems~\ref{thmintermediates}~and~\ref{thmnewdepreac} applied in that order,
\begin{equation}
Y \rightleftharpoons 0 \rightleftharpoons X,  \,\,2X\rightarrow X + Y \rightarrow 3X \tag{\mbox{$\mathcal{R}$}}
\end{equation}
admits MPNE. It is easy to confirm however, either by direct calculation or by applying Theorem~4.13 of \cite{Joshi.2013aa} and Theorem~3.6 of \cite{JoshiShiu2016}, that no fully open induced subnetwork of $\mathcal{R}$ admits MPNE. Thus $\mathcal{R}$ is a minimal fully open CRN admitting MPNE in the induced subnetwork partial order, while it is not minimal w.r.t. the partial order $\preceq$ here.
\end{remark}

The following corollary is of importance in the analysis of biological systems, and involves the operation of adding an enzymatic mechanism to a set of reactions. It will be used a number of times in the analysis of a particular system in Section~\ref{secbioexample}:
\begin{cor}[Adding enzymatic mechanisms]\label{cor:addEnz}
Let $E$ and  $I_1, \ldots, I_m$ be new species, not involved in $\cal R$, and let $c_i \ge 0$ ($i = 1, \ldots, m$). Suppose we create ${\cal R}'$ from ${\cal R}$ by replacing each of the reactions
\[
a_i\cdot X\to b_i\cdot X \quad \mbox{with a chain} \quad c_iE+ a_i\cdot X\rightleftharpoons I_i\to  c_iE+b_i\cdot X,\,\,(i=1,\ldots,m)\,.
\]
If $\cal R$ admits MPNE (resp., MPSE), then ${\cal R}'$ admits MPNE (resp., MPSE).
\end{cor}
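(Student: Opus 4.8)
The plan is to derive Corollary~\ref{cor:addEnz} by a two-step reduction to Theorem~\ref{thmintermediates} followed by Theorem~\ref{thmnewdepreac}, exactly in the spirit of Remark~\ref{remFObest}. First I would observe that the enzymatic chain $c_iE + a_i\cdot X \rightleftharpoons I_i \to c_iE + b_i\cdot X$ is, up to reversibility of the first arrow, an instance of the intermediate-insertion construction of Theorem~\ref{thmintermediates}: writing the new species list as $Y := (E, I_1, \ldots, I_m)$, the reaction $a_i\cdot X \to b_i\cdot X$ of $\mathcal{R}$ is replaced by $a_i\cdot X \to c_i\cdot X + \beta_i\cdot Y \to b_i\cdot X$, where the intermediate complex $c_i\cdot X + \beta_i\cdot Y$ has $c_i$ copies of each $X$-species, one copy of $I_i$, and (for the $E$-coordinate) zero copies, i.e.\ $\beta_i$ is the standard basis vector $e_{I_i}$ in the $Y$-coordinates. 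I would then check the rank hypothesis of Theorem~\ref{thmintermediates}: the matrix $\beta = (\beta_1|\cdots|\beta_m)$ has the $I_i$-rows forming an $m\times m$ identity block, so $\mathrm{rank}\,\beta = m$, and the hypothesis holds automatically with room to spare (the $E$-row is identically zero, which is fine).

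The only genuine mismatch is that Theorem~\ref{thmintermediates} inserts an \emph{irreversible} first step $a_i\cdot X \to c_i\cdot X + \beta_i\cdot Y$, whereas the enzymatic mechanism has it reversible, $c_iE + a_i\cdot X \rightleftharpoons I_i$. To bridge this I would apply Theorem~\ref{thmintermediates} first to obtain from $\mathcal{R}$ an intermediate network $\widetilde{\mathcal{R}}$ containing the irreversible chains $c_iE + a_i\cdot X \to I_i \to c_iE + b_i\cdot X$, which admits MPNE (resp.\ MPSE). Then I would add the reverse reactions $I_i \to c_iE + a_i\cdot X$ one at a time to reach $\mathcal{R}'$. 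Each such reverse reaction is a \emph{dependent} reaction of the network to which it is being added — indeed, the reaction vector of $I_i \to c_iE + a_i\cdot X$ is exactly the negative of the reaction vector of $c_iE + a_i\cdot X \to I_i$, which is already present. Hence Theorem~\ref{thmnewdepreac} (see also Remark~\ref{newdeprev}, which explicitly licenses adding the reverse of an existing reaction) applies at each step, and MPNE (resp.\ MPSE) is preserved throughout. Composing, $\mathcal{R}'$ admits MPNE (resp.\ MPSE).

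Two small bookkeeping points would round out the argument. First, the species $E$ is shared across all $m$ chains, whereas Theorem~\ref{thmintermediates} is stated with a fixed list $Y$ of new species and arbitrary $\beta_i$ — this is already accommodated, since $E$ simply appears (with multiplicity $c_i$) in the intermediate complex $c_i\cdot X + \beta_i\cdot Y$ of every chain, and there is no requirement in Theorem~\ref{thmintermediates} that the new species be used only once. Second, one should note that if some $c_i = 0$ then $E$ does not appear in that particular chain at all, which is harmless; and the various $a_i, b_i, c_i$ are allowed to coincide, matching the flexibility already granted in Theorem~\ref{thmintermediates}. The kinetic compatibility is inherited automatically: mass action on $\mathcal{R}$ gives mass action on $\widetilde{\mathcal{R}}$ and then on $\mathcal{R}'$, and likewise for positive general kinetics, as already built into the hypotheses of both theorems invoked.

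I do not anticipate a serious obstacle here — the corollary is essentially a packaging of Theorems~\ref{thmintermediates} and~\ref{thmnewdepreac}. The one place where care is needed is making the identification of the enzymatic chain with the intermediate-insertion template precise enough that the rank condition is transparently satisfied; once the new-species list is fixed as $(E, I_1,\ldots,I_m)$ and one notes that the $I_i$-block of $\beta$ is the identity, everything else is routine.
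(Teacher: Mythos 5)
There is a genuine gap, and it is precisely the step your reduction omits. Theorem~\ref{thmintermediates} replaces $a_i\cdot X \to b_i\cdot X$ by a chain $a_i\cdot X \to c_i\cdot X + \beta_i\cdot Y \to b_i\cdot X$ whose two \emph{endpoint} complexes are the original source and target, built from the pre-existing species $X$ only; the new species $Y$ are permitted to appear solely in the intermediate complex. The enzymatic chain $c_iE + a_i\cdot X \rightleftharpoons I_i \to c_iE + b_i\cdot X$ has the new species $E$ sitting in \emph{both endpoint complexes} whenever $c_i>0$, so it is not an instance of the template with $Y=(E,I_1,\ldots,I_m)$: a single application of Theorem~\ref{thmintermediates} to $\mathcal{R}$ can only produce chains beginning at $a_i\cdot X$ and ending at $b_i\cdot X$, never at $c_iE+a_i\cdot X$ and $c_iE+b_i\cdot X$. (You are also conflating two uses of the symbol $c_i$: in the corollary it is the \emph{scalar} stoichiometry of $E$, while in Theorem~\ref{thmintermediates} it is the \emph{vector} of old-species stoichiometries of the intermediate complex; the intermediate here is just $I_i$, i.e.\ the old-species part of the intermediate is the zero vector.) Your argument does go through in the special case $c_i=0$ for all $i$, but that excludes every application made of the corollary in Section~\ref{secbioexample}.

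The repair is a preliminary application of Theorem~\ref{thmtrivial}: first add $E$ as a \emph{trivial} species with stoichiometry $c_i$ on both sides of the $i$th reaction, turning $a_i\cdot X \to b_i\cdot X$ into $c_iE + a_i\cdot X \to c_iE + b_i\cdot X$ while preserving MPNE/MPSE. After this step $E$ belongs to the ambient species set, so Theorem~\ref{thmintermediates} applies with new species $Y=(I_1,\ldots,I_m)$ only, intermediate complexes $I_i$ (so $\beta$ is the $m\times m$ identity and the rank condition is immediate), yielding the irreversible chains $c_iE+a_i\cdot X \to I_i \to c_iE+b_i\cdot X$. Your final step --- adjoining the reverse reactions $I_i \to c_iE + a_i\cdot X$ one at a time via Theorem~\ref{thmnewdepreac} and Remark~\ref{newdeprev} --- is correct as written and matches the paper. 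With the trivial-species step inserted at the front, your three-stage composition coincides with the paper's proof.
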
 
\begin{proof}
We perform successive modifications that preserve the capacity for MPNE and MPSE:
\begin{enumerate}[align=left,leftmargin=*]
\item We add the trivial species E into all the reactions simultaneously (an application of Theorem~\ref{thmtrivial}):
\[
c_iE+ a_i\cdot X\to  c_iE+b_i\cdot X \text{ replaces } a_i\cdot X\to b_i\cdot X,\quad (i=1,\ldots,m).
\]
\item We add the set of intermediate species $I_i$ (an application of Theorem~\ref{thmintermediates}):
\[
c_iE+ a_i\cdot X\to I_i\to  c_iE+b_i\cdot X  \text{ replaces } c_iE+ a_i\cdot X\to  c_iE+b_i\cdot X,\quad (i=1,\ldots,m).
\]
\item We add the dependent reactions $I_i\to c_iE+a_i\cdot X$ ($m$ applications of Theorem~\ref{thmnewdepreac}):
\[
c_iE+ a_i\cdot X\rightleftharpoons I_i\to  c_iE+b_i\cdot X \text{ replaces } c_iE+ a_i\cdot X\to I_i\to  c_iE+b_i\cdot X,\quad (i=1,\ldots,m)
\]
\end{enumerate}
We could see step (2) as $m$ applications of Theorem~\ref{thmintermediates} or a single application of the theorem. In either case, the nondegeneracy condition of Theorem~\ref{thmintermediates} is easily shown to be met. 
\end{proof}

\section{A biologically motivated example}
\label{secbioexample}

Before turning to the proofs of the theorems, we present an example indicating how the results here intersect with published work on biologically important systems. 

The mitogen-activated (MAPK) cascade is an extensively studied network occurring in various biological processes in eukaryotic cells. The model of the MAPK cascade (Figure \ref{fig:HFfeed}(f)) was developed by Huang and Ferrell \cite{Huang.1996aa}, who showed that the cascade may exhibit ultrasensitive behaviour, making it appropriate for switch-like processes like mitogenesis or cell-fate determination. Addition of a negative feedback in the MAPK cascade (Figure \ref{fig:HFfeed}(g)) was demonstrated numerically to cause oscillatory behaviour by Kholodenko \cite{Kholodenko.2000aa}. Qiao et al. \cite{Qiao.2007aa} later showed numerically that the Huang-Ferrell model without the added negative feedback can also exhibit oscillatory behaviour and bistability. Bistability in the MAPK cascade has also been observed numerically and discussed in earlier papers \cite{Ferrell.1998aa, Kholodenko.2000aa}. Here we show how straightforward applications of our results prove the existence of MPNE and in fact MPSE in the MAPK cascade, with or without negative feedback. With mass action, the MAPK cascade with negative feedback (Figure \ref{fig:HFfeed}(g)) involves 25 species and 36 reactions; we start by proving that the much simpler network in Figure \ref{fig:HFfeed}(a) admits MPSE, and then apply a sequence of modifications that preserve MPSE to ultimately transform network (a) into (g) and (f) of Figure \ref{fig:HFfeed}.

\begin{figure}[!p]
\resizebox{15cm}{!}{
\begin{tikzpicture}
\node (a) at (-3,0) {\includegraphics[scale=.27]{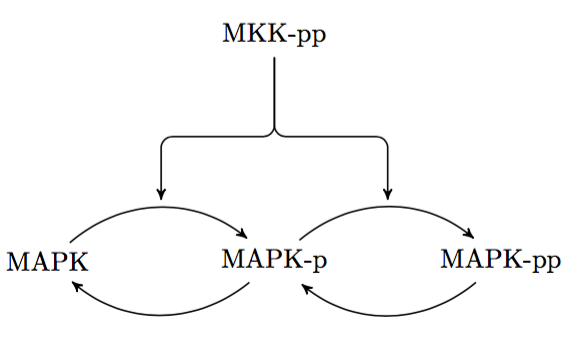}};
\draw [draw=mygray, rounded corners, fill=light-gray, opacity=0.1]
(-6,1.7) -- (-6,-1.7) -- (0,-1.7) -- (0,1.7) -- cycle;
\node () at (-5.5, 1.3) {\bf (a)};

\begin{scope}[yshift=-8]
\node (b) at (-3,-5) {\includegraphics[scale=.27]{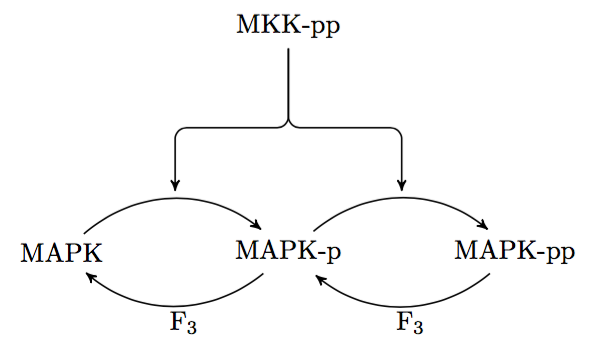}};
\draw [draw=mygray, rounded corners, fill=light-gray, opacity=0.1]
(-6,-3.3) -- (-6,-6.7) -- (0,-6.7) -- (0,-3.3) -- cycle;
\node () at (-5.5, -3.7) {\bf (b)};
\end{scope}

\node (c) at (-3,-11) {\includegraphics[scale=.27]{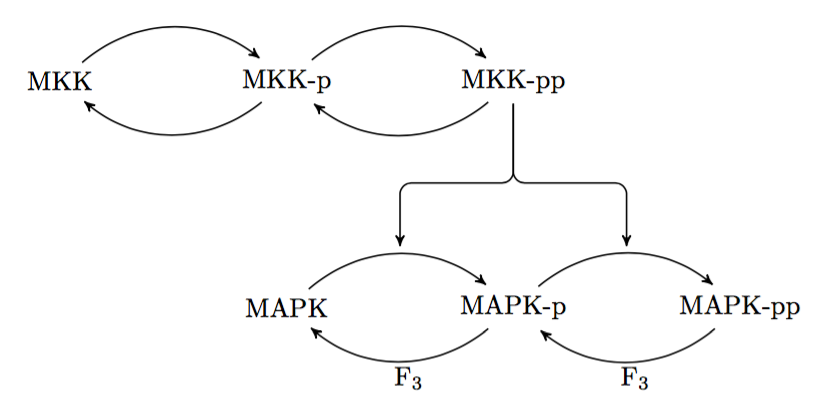}};
\draw [draw=mygray, rounded corners, fill=light-gray, opacity=0.1]
(-6.9,-9) -- (-6.9,-12.9) -- (1,-12.9) -- (1,-9) -- cycle;
\node () at (-6.5, -12.5) {\bf (c)};

\node (d) at (-3,-18) {\includegraphics[scale=.27]{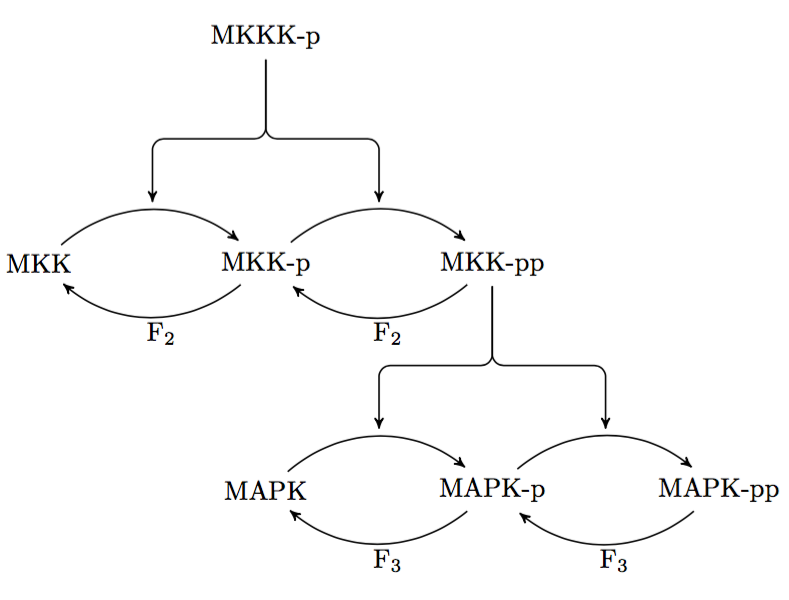}};
\draw [draw=mygray, rounded corners, fill=light-gray, opacity=0.1]
(-6.9,-15) -- (-6.9,-21) -- (1,-21) -- (1,-15) -- cycle;
\node () at (-6.5, -20.6) {\bf (d)};

\node (e) at (7,-18) {\includegraphics[scale=.27]{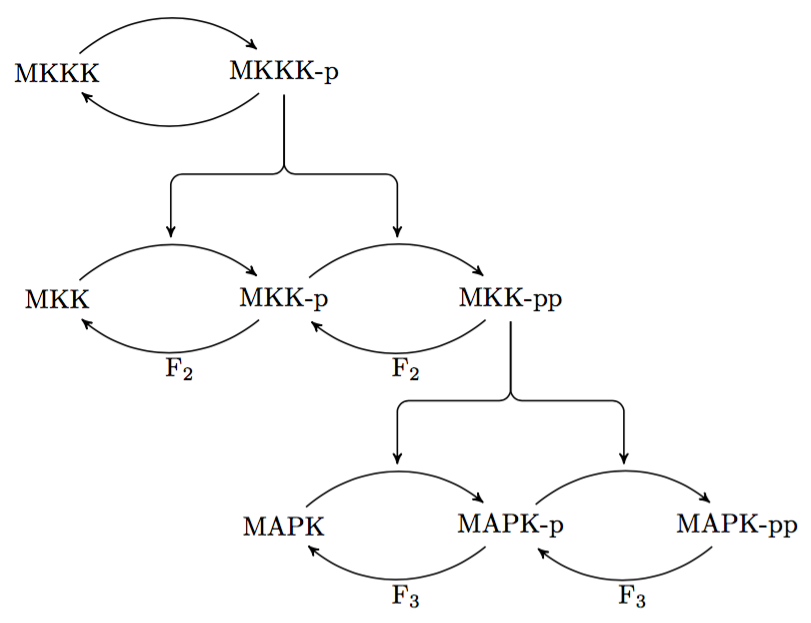}};
\draw [draw=mygray, rounded corners, fill=light-gray, opacity=0.1]
(3.1,-15) -- (3.1,-21) -- (11,-21) -- (11,-15) -- cycle;
\node () at (3.5, -20.6) {\bf (e)};

\begin{scope}[yshift=-8]
\node (f) at (7,-9.5) {\includegraphics[scale=.27]{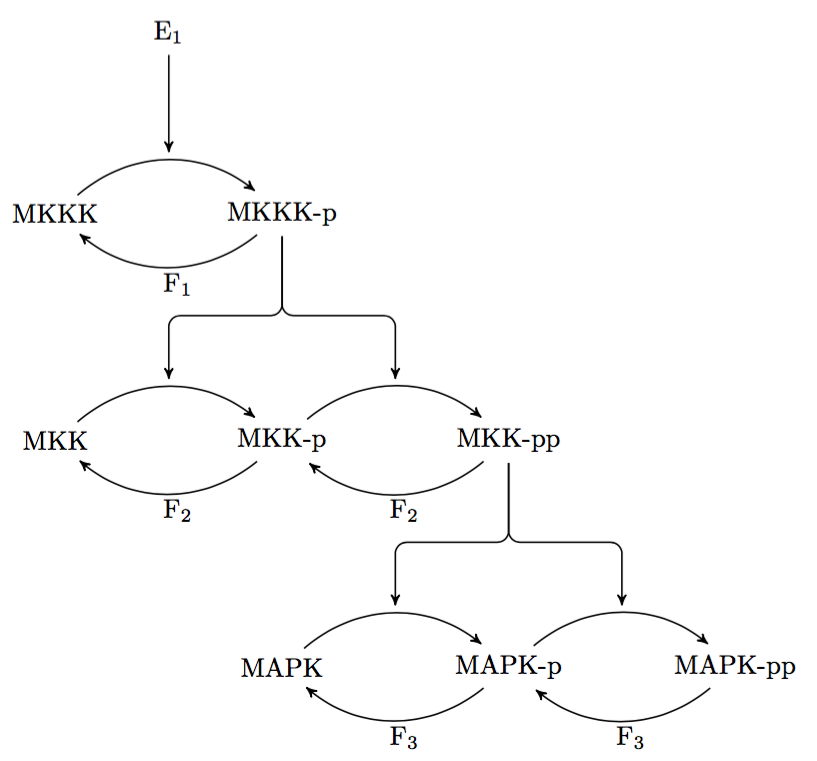}};
\draw [draw=mygray, rounded corners, fill=light-gray, opacity=0.1]
(3.1,-5.9) -- (3.1,-13.2) -- (11,-13.2) -- (11,-5.9) -- cycle;
\node () at (3.5, -12.8) {\bf (f)};
\end{scope}

\node (g) at (7,-1) {\includegraphics[scale=.27]{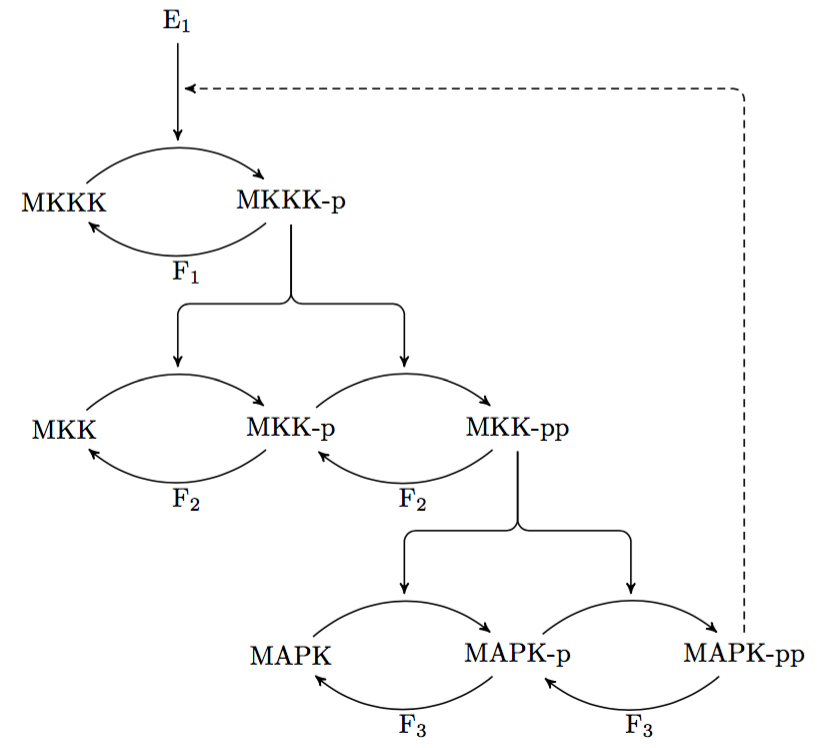}};
\draw [draw=mygray, rounded corners, fill=light-gray, opacity=0.1]
(3.1,2.55) -- (3.1,-4.6) -- (11,-4.6) -- (11,2.55) -- cycle;
\node () at (3.5, -4.2) {\bf (g)};

\begin{scope}[every node/.style={single arrow, draw},
rotate border/.style={shape border uses incircle, shape border rotate=#1}]
\node at (1.9, -18)[minimum height=1.45cm, minimum width = 1.2cm, fill=light-gray]{{\color{light-gray} up}};{up};
\node at (7,-5.5) [minimum height=1.45cm, shape border rotate=90, fill=light-gray]{{\color{light-gray} up}};
\node at (7,-14.4) [minimum height=1.45cm,shape border rotate=90, fill=light-gray]{\color{light-gray}up};

\node at (-3,-13.85) [minimum height=1.45cm,shape border rotate=270, fill=light-gray]{\color{light-gray}up};
\node at (-3,-7.9) [minimum height=1.45cm,shape border rotate=270, fill=light-gray]{\color{light-gray}up};
\node at (-3,-2.5) [minimum height=1.45cm,shape border rotate=270, fill=light-gray]{\color{light-gray}up};
\end{scope}

\end{tikzpicture}
}
\caption{The capacity for MPNE is preserved through successive modifications of the minimal network (a) into the MAPK cascade with negative feedback (g).}\label{fig:HFfeed}
\end{figure}

In what follows, networks (a)--(g) refer to Figure \ref{fig:HFfeed}. We use the following abbreviations in order to aid readability of the reaction networks: X = MAPK, Y = MKK, Z = MKKK.

{\bf (a) A minimal multistationary subnetwork.} The two first phosphorylation steps of X (namely, MAPK) are induced by Y-pp (namely, MKK-pp), but only the first one involves an intermediate species. Both dephosphorylation steps are direct and involve no enzyme. The reactions are as follows:  
\begin{eqnarray}\label{eq:neta}
&\text{Y-pp$\,\,+\,\,$X}\overset{k_1}{\underset{k_2}{\,\,\rightleftharpoons\,\,}} \text{Y-pp--X}\overset{k_3}{\,\,\to\,\,} \text{Y-pp+X-p}\overset{k_4}{\,\,\to\,\,}\text{Y-pp+X-pp}, \qquad \text{X-pp}\overset{k_5}{\,\,\to\,\,} \text{X-p} \overset{k_6}{\,\,\to\,\,} \text{X}
\end{eqnarray}
This network involves 5 species and 6 irreversible reactions. By an exact calculation (Appendix~\ref{appbio}) one can prove that the network with mass action kinetics admits MPNE. This can also be confirmed using CRNToolbox \cite{crntoolbox}. We also confirmed numerically that it admits MPSE. Moreover, it can be checked that the network cannot be constructed from any simpler network which admits MPNE using the modifications described in Theorems~\ref{thmnewdepreac}~to~\ref{thmintermediates}. In this sense the network is a minimal network admitting MPNE with respect to the partial order $\preceq$ and we might refer to it as an ``atom of multistationarity'' borrowing the terminology of \cite{joshishiu}.

{\bf (b) Adding enzymes: the double phosphorylation network.} Network (a) is a simplified version of the much-studied double phosphorylation-dephosphorylation cycle, whose capacity for bistable behaviour has been both demonstrated by numerical simulations, and shown analytically \cite{markevich, Ortega.2006ab, Wang.2008aa}. Its reaction network
\begin{eqnarray}\label{eq:netb} \nonumber
&\text{Y-pp\,\,+\,\,X}\,\,\,\rightleftharpoons\,\,\, \text{Y-pp--X}
\,\,\,\to\,\,\,\text{Y-pp\,\,+\,\,X-p}\,\,\,\rightleftharpoons\,\,\,\text{Y-pp--X-p}\,\,\,\to\,\,\,\text{Y-pp\,\,+\,\,X-pp}\\ 
&\text{F$_3$\,\,+\,\,X-pp}\,\,\,\rightleftharpoons\,\,\, \text{F$_3$--X-pp}\,\,\,\to\,\,\, \text{F$_3$\,\,+\,\,X-p}
\,\,\,\rightleftharpoons\,\,\, \text{F$_3$--X-p}\,\,\,\to\,\,\,\text{F$_3$\,\,+\,\,X}
\end{eqnarray}
is obtained from that of (\ref{eq:neta}) by adding enzymatic mechanisms as in Corollary~\ref{cor:addEnz}: the two reactions $\text{X-pp}\to \text{X-p}$ and $\text{X-p}\to \text{X}$ get replaced by the two chains 
$\text{F$_3+$X-pp}\rightleftharpoons\text{F$_3$--X-pp}\to\text{F$_3+$X-p}$ and 
$\text{F$_3+$X-p}\rightleftharpoons\text{F$_3$--X-p}\to\text{F$_3+$X}$ respectively. It follows that (b), with mass action kinetics, admits MPSE.

{\bf (c) Cascading double phosphorylations I.} A simple version of the MKK double phosphorylation, not mediated by an enzymatic mechanism, is coupled upstream of the double phosphorylation mechanism of MAPK from (b). The network is composed of the reactions in (\ref{eq:netb}), plus
\begin{equation}
\label{eq:netc}
\text{Y}\,\,\rightleftharpoons\,\, \text{Y-p} \,\,\rightleftharpoons\,\, \text{Y-pp}\,.
\end{equation}
Adding these two reversible reactions involving the two new species Y-p (MKK-p)  and Y (MKK) preserves the capacity for MPSE of the new network. This can be regarded as two applications of Theorem~\ref{thmblockadd}, or a single application of the theorem; the nondegeneracy condition is easily seen to be satisfied. Therefore (c), with mass action kinetics, admits MPSE. 

{\bf (d) Cascading double phosphorylations II.} Here the upstream double phosphorylation in (c) is modified to include enzymatic mechanisms. The reaction network of (d) is obtained from that of (c) by replacing the four irreversible reactions (\ref{eq:netc}) by the four chains
\[
\begin{array}{cc}
\text{Z-p$\,\,+\,\,$Y}\,\,\rightleftharpoons\,\, \text{Z-p--Y}\,\,\to\,\, \text{Z-p$\,\,+\,\,$Y-p,}&\text{Z-p$\,\,+\,\,$Y-p}\,\,\rightleftharpoons\,\, \text{Z-p--Y-p}\,\,\to\,\, \text{Z-p$\,\,+\,\,$Y-pp,}\\
\text{F$_2$$\,\,+\,\,$Y-pp}\,\,\rightleftharpoons\,\, \text{F$_2$--Y-pp}\,\,\to\,\, \text{F$_2$$\,\,+\,\,$Y-p,} & \text{F$_2$$\,\,+\,\,$Y-p}\,\,\rightleftharpoons\,\, \text{F$_2$--Y-p}\,\,\to\,\, \text{F$_2$$\,\,+\,\,$Y}.
\end{array}
\] 
Corollary~\ref{cor:addEnz} guarantees that (d) admits MPSE. 

{\bf (e) A three layer cascade.} In network (e) a third layer is added upstream of cascade (d). The network is composed of all reactions of (d) and 
\begin{equation}\label{eq:nete}
\text{Z}\,\,\rightleftharpoons\,\, \text{Z-p}
\end{equation}
Adding this reversible reaction involving the new species Z (namely, MKKK) falls immediately under the scope of Theorem~\ref{thmblockadd}. Therefore (e), with mass action kinetics, admits MPSE.   

{\bf (f) MAPK cascade.} Now we modify (e) by adding enzymatic mechanisms (Corollary~\ref{cor:addEnz}) to the reversible reaction (\ref{eq:nete}), which is replaced by the two chains 
$\text{E$_1$$+$Z}\rightleftharpoons \text{E$_1$--Z}\to \text{E$_1$$+$Z-p}$ and 
$\text{F$_1$$+$Z-p}\rightleftharpoons \text{F$_1$--Z-p}\to \text{F$_1$$+$Z}.$ It follows that the new network (Huang-Ferrell MAPK cascade \cite{Huang.1996aa}, reactions below), with mass action kinetics, admits MPSE:
\begin{equation}
\label{eq:netf}
\begin{array}{c}
\text{E$_1$$\,\,+\,\,$Z}\,\,\rightleftharpoons\,\, \text{E$_1$--Z}\,\,\to\,\, \text{E$_1$$\,\,+\,\,$Z-p},\,\,\,\text{F$_1$$\,\,+\,\,$Z-p}\,\,\rightleftharpoons\,\, \text{F$_1$--Z-p}\,\,\to\,\, \text{F$_1$$\,\,+\,\,$Z}\\
\text{Z-p$\,\,+\,\,$Y}\,\,\rightleftharpoons\,\, \text{Z-p--Y}\,\,\to\,\, \text{Z-p$\,\,+\,\,$Y-p}
\,\,\rightleftharpoons\,\, \text{Z-p--Y-p}\,\,\to\,\, \text{Z-p$\,\,+\,\,$Y-pp}
\\  
 \text{F$_2$$\,\,+\,\,$Y-pp}\,\,\rightleftharpoons\,\, \text{F$_2$--Y-pp}\,\,\to\,\, \text{F$_2$$\,\,+\,\,$Y-p}
\,\,\rightleftharpoons\,\, \text{F$_2$--Y-p}\,\,\to\,\, \text{F$_2$$\,\,+\,\,$Y} \\ 
\text{Y-pp$\,\,+\,\,$X}\,\,\rightleftharpoons\,\, \text{Y-pp--X}
\,\,\to\,\,\text{Y-pp$\,\,+\,\,$X-p}\,\,\rightleftharpoons\,\,\text{Y-pp--X-p}\,\,\to\,\,\text{Y-pp$\,\,+\,\,$X-pp}\\ 
\text{F$_3$$\,\,+\,\,$X-pp}\,\,\rightleftharpoons\,\, \text{F$_3$--X-pp}\,\,\to\,\, \text{F$_3$$\,\,+\,\,$X-p}
\,\,\rightleftharpoons\,\, \text{F$_3$--X-p}\,\,\to\,\,\text{F$_3$$\,\,+\,\,$X}
\end{array} 
\end{equation}

{\bf (g) MAPK cascade with negative feedback.} Finally, we add a negative feedback loop from X-pp as in \cite{Kholodenko.2000aa}. In that paper, the model describes the feedback as noncompetitive inhibition  of Z phosphorylation, whose mechanism can be unpacked as (see for example \cite{Marangoni.2003aa}):
\begin{eqnarray}\label{eq:netg} \label{eq:nf1}
&\text{E}_1\,\,+\,\,\text{X-pp}\,\,\rightleftharpoons\,\, \text{E}_1\text{--X-pp}\\  \label{eq:nf2}
&\text{E}_1\text{--X-pp$\,\,+\,\,$Z}\,\,\rightleftharpoons\,\, \text{E}_1\text{--X-pp--Z}\\ \label{eq:nf3}
&\text{E}_1\text{--X-pp--Z}\,\,\rightleftharpoons\,\, \text{E}_1\text{--Z$\,\,+\,\,$X-pp}.
\end{eqnarray}
These reactions, together with those of (\ref{eq:netf}) make up network (g).
First we add the reversible reactions (\ref{eq:nf1}) and (\ref{eq:nf2}), involving the new species E$_1$--X-pp and E$_1$--X-pp--Z, to the network (\ref{eq:netf}). The nondegeneracy condition is straightforward to check, and by Theorem~\ref{thmblockadd} the resulting network ${\cal R}_1$ admits MPSE. Next, let $w_1, w_2, w_3$ and $w_4$ denote the reaction vectors of (\ref{eq:nf1}), (\ref{eq:nf2}), (\ref{eq:nf3}) and E$_1$+Z $\rightleftharpoons$ $\text{E}_1\text{--Z}$, and note that $w_3=-w_1-w_2+w_4,$ making  (\ref{eq:nf3}) a dependent reaction for ${\cal R}_1$.  By Theorem~\ref{thmnewdepreac} (and Remark~\ref{newdeprev}), we may add (\ref{eq:nf3}) to ${\cal R}_1$, and thus the resulting network (g), with mass action kinetics, admits MPSE.

\section{Proofs of the theorems}
\label{secproofs}
Some additional notation simplifies the presentation. 

\begin{notation}[Entrywise product and entrywise functions]
Given two matrices $A$ and $B$ with the same dimensions, $A \circ B$ will refer to the entrywise (or Hadamard) product of $A$ and $B$, namely $(A\circ B)_{ij} = A_{ij}B_{ij}$. When we apply functions such as $\ln(\cdot)$ and $\exp(\cdot)$ with a vector or matrix argument, we mean entrywise application. Similarly, if $x = (x_1, \ldots, x_n)^\mathrm{t}$ and $y = (y_1, \ldots, y_n)^\mathrm{t}$, then $x/y$ means $(x_1/y_1, x_2/y_2,\ldots, x_n/y_n)^\mathrm{t}$.
\end{notation}

\begin{notation}[Set of integers, vector of ones]
We write $\langle n \rangle$ for $\{1, \ldots, n\}$. We write $\mathbf{1}$ for a vector of ones whose length is inferred from the context, and given some parameter $\epsilon$, we write $\bm{\epsilon}$ for $\epsilon \mathbf{1}$.
\end{notation}

\begin{notation}[Monomials, vector of monomials]
Given $x=(x_1,\ldots, x_n)^\mathrm{t}$ and $a = (a_1, \ldots, a_n)$, $x^a$ is an abbreviation for the monomial $\prod_ix_i^{a_i}$. If $A$ is an $m \times n$ matrix with rows $A_1, \ldots, A_m$, then $x^A$ means the vector of monomials $(x^{A_1}, x^{A_2}, \ldots, x^{A_m})^\mathrm{t}$. 
\end{notation} 

The following three examples demonstrate how entrywise and monomial notation greatly abbreviate otherwise lengthy calculations.
\begin{example}[Rules of exponentiation]
Let $x \in \mathbb{R}^m_{\gg 0}$, $A,B \in \mathbb{R}^{n \times m}$ and $C \in \mathbb{R}^{k \times n}$. Let $O$ refer to the $n \times m$ matrix of zeros. Then (i) $x^O = \mathbf{1}$, (ii) $x^{A+B} = x^A \circ x^B$, and (iii) $(x^A)^C = x^{CA}$. 
\end{example}

\begin{example}[Logarithm of monomials]
Suppose $x \in \mathbb{R}^m_{\gg 0}$, $y,z \in \mathbb{R}^n_{\gg 0}$ and $A,B \in \mathbb{R}^{m \times n}$. If $w=x \circ y^A \circ z^B$, then $\ln w = \ln x + A\ln y + B \ln z$. 
\end{example}

\begin{example}[Differentiation of monomials]
Suppose $k \in \mathbb{R}^m_{\gg 0}$, $x \in \mathbb{R}^n_{\gg 0}$, $A \in \mathbb{R}^{m \times n}$. Let $w \colon \mathbb{R}^n_{\gg 0} \to \mathbb{R}^m_{\gg 0}$ be defined by $w(x) := k \circ x^A$. Then $D_xw = \mathrm{diag}(w)\,A\,\mathrm{diag}(\mathbf{1}/x)$. This expression is familiar as the Jacobian matrix of a rate function of a mass action CRN (\cite{banajiSIAM} for example.)
\end{example}

\begin{notation}[Submatrices and minors of a matrix]
Given an $n \times m$ matrix $A$ and (nonempty) sets $\alpha \subseteq \langle n \rangle$ and $\beta \subseteq \langle m \rangle$, define $A(\alpha|\beta)$ to be the submatrix of $A$ with rows from $\alpha$ and columns from $\beta$. If $|\alpha| = |\beta|$, then $A[\alpha|\beta] := \mathrm{det}(A(\alpha|\beta))$. $A(\alpha)$ is shorthand for $A(\alpha|\alpha)$, and $A[\alpha]$ is shorthand for the principal minor $A[\alpha|\alpha]$.
\end{notation}

For our purposes here, the implicit function theorem (IFT) takes the form:
\begin{lemma1}[Implicit Function Theorem]
\label{lemIFT}
Let $X \subseteq \mathbb{R}^n \times \mathbb{R}^m \simeq \mathbb{R}^{n+m}$ be open and $F\colon X \to \mathbb{R}^n$ be $C^1$. Suppose $F(a,b)=0$ for some $(a,b) \in X$ and the Jacobian matrix $D_1F(a,b)$ (namely with respect to the first variables) is nonsingular. Then there exist $U \subseteq \mathbb{R}^m$, $V \subseteq \mathbb{R}^n$ both open with $(a,b) \in W:=V\times U \subseteq X$, and a $C^1$ function $\phi \colon U \to V$ satisfying $\phi(b) = a$, and 
\[
\{(x,y) \in W\,|\, F(x,y)=0\} = \{(\phi(y),y)\,|\, y \in U\}\,,
\]
namely the zero set of $F$ in $W$ is precisely the graph of $\phi$.
\end{lemma1}
\begin{proof}
See, for example, Theorem 9.28 in \cite{rudin}.
\end{proof}

Application of the IFT will frequently take the following form:
\begin{cor}[The IFT and reduced Jacobian matrices]\label{corift}Let $X\subseteq \mathbb{R}^n$ be open and let $S \subseteq \mathbb{R}^n$ be a $k$-dimensional linear subspace of $\mathbb{R}^n$. Let $\epsilon_1>0$ and consider a $C^1$ function $F\colon X \times (-\epsilon_1,\epsilon_1) \to \mathbb{R}^n$ s.t. for each fixed $\epsilon$, $F_\epsilon := F(\cdot, \epsilon)$ satisfies $\mathrm{im}\,F_\epsilon \subseteq S$. Let $p \in X$ be a nondegenerate zero of $F_0$ w.r.t. $S$, namely, $F_0(p) = 0$ and $J_SF_0(p) \neq 0$. Then there exists $0< \epsilon_0\leq\epsilon_1$ and a $C^1$ curve $\phi\colon(-\epsilon_0,\epsilon_0) \to p+S$ with $\lim_{\epsilon\to 0} \phi(\epsilon) = p$ and $F_\epsilon(\phi(\epsilon)) = 0$ for $\epsilon \in (-\epsilon_0, \epsilon_0)$. Moreover, $\phi(\epsilon)$ is an isolated zero of $F_\epsilon$ on $p+S$.
\end{cor}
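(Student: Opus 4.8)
The plan is to reduce the statement to a direct application of the Implicit Function Theorem (Lemma~\ref{lemIFT}) by passing to coordinates adapted to the subspace $S$. First I would fix a matrix $M$ whose columns form a basis for $S$, so that $S = \mathrm{im}\,M$ and $M$ has rank $k$. Since $\mathrm{im}\,F_\epsilon \subseteq S$ for every $\epsilon$, Definition~\ref{notationreddet} gives a unique $C^1$ function $\hat{F}\colon X \times (-\epsilon_1,\epsilon_1) \to \mathbb{R}^k$ with $F(x,\epsilon) = M\hat F(x,\epsilon)$; one checks $\hat F$ is $C^1$ jointly because $M$ has a left inverse $M^+$ (e.g. $M^+ = (M^\mathrm{t}M)^{-1}M^\mathrm{t}$) and $\hat F = M^+ F$. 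Now I parametrise the affine slice $p + S$ by $t \mapsto p + Mt$ for $t$ in a neighbourhood of $0$ in $\mathbb{R}^k$, and define $G\colon \tilde U \times (-\epsilon_1,\epsilon_1) \to \mathbb{R}^k$ by $G(t,\epsilon) := \hat F(p + Mt, \epsilon)$, where $\tilde U$ is an open neighbourhood of $0$ small enough that $p + Mt \in X$. Then $G$ is $C^1$, $G(0,0) = \hat F(p,0) = 0$ since $F_0(p)=0$ and $M$ has trivial kernel, and the Jacobian of $G$ in the $t$-variables at $(0,0)$ is $D\hat F_0(p)\,M$, which is exactly a reduced Jacobian matrix $D_M F_0$ of $F_0$ representing the operator $D_S F_0$; by hypothesis its determinant is $J_S F_0(p) \neq 0$, so this Jacobian is nonsingular.

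Applying Lemma~\ref{lemIFT} to $G$ at $(0,0)$ with the $t$-variables playing the role of the ``first variables'' yields $\epsilon_0 \in (0,\epsilon_1]$, an open neighbourhood of $0$ in $\mathbb{R}^k$, and a $C^1$ function $\psi\colon(-\epsilon_0,\epsilon_0) \to \mathbb{R}^k$ with $\psi(0) = 0$ and $G(\psi(\epsilon),\epsilon)=0$, and moreover the zero set of $G$ in the product neighbourhood $W$ is exactly the graph of $\psi$. Setting $\phi(\epsilon) := p + M\psi(\epsilon)$ gives a $C^1$ curve into $p + S$ with $\lim_{\epsilon\to 0}\phi(\epsilon) = p + M\psi(0) = p$, and $F_\epsilon(\phi(\epsilon)) = M\hat F(p+M\psi(\epsilon),\epsilon) = MG(\psi(\epsilon),\epsilon) = 0$, as required. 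For the final assertion, note $F_\epsilon(p+Mt) = 0$ with $p+Mt$ in the (open) slice of $W$ forces $G(t,\epsilon)=0$, hence $t = \psi(\epsilon)$ by the graph characterisation; since $M$ is injective this means $\phi(\epsilon)$ is the unique zero of $F_\epsilon$ on $p+S$ within a neighbourhood of $\phi(\epsilon)$, i.e.\ it is isolated.

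I do not expect a serious obstacle here: the only point requiring a little care is checking that the Jacobian computed in the chart $t \mapsto p + Mt$ genuinely agrees with the reduced Jacobian matrix $D_M F_0$ of Definition~\ref{notationreddet}, and that its nonsingularity is independent of the choice of basis $M$ — both of which are immediate from that definition, since different bases give similar matrices and hence the same (nonzero) determinant condition $J_S F_0(p) \neq 0$. One should also remark that shrinking $\epsilon_0$ if necessary keeps $\phi(\epsilon)$ inside $X$, which is automatic from continuity of $\phi$ and openness of $X$.
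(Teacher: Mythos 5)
Your proposal is correct and follows essentially the same route as the paper's own proof: pass to the coordinates $t \mapsto p + Mt$ on the slice $p+S$, observe that the Jacobian of the resulting map $G$ in the $t$-variables is exactly the reduced Jacobian matrix $D_M F_0$, and apply the IFT. The extra details you supply (joint $C^1$-ness of $\hat F$ via a left inverse of $M$, and the explicit isolation argument from the graph characterisation) are correct and merely make explicit what the paper leaves implicit.
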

\begin{proof}
As in Definition~\ref{notationreddet}, let $M$ be a matrix whose columns are a basis for $S$, and define $\hat{F}_\epsilon\colon X \to \mathbb{R}^k$ via $F_\epsilon(x) = M\hat{F}_\epsilon(x)$. Let $Y = \{y \in \mathbb{R}^k \,|\, p+My \in X\}$. Note that $Y$ is open and includes the origin. Define $G \colon Y\times (-\epsilon_1, \epsilon_1) \to \mathbb{R}^k$ by $G(y,\epsilon) = \hat{F}_\epsilon(p+My)$. Note that (fixing $\epsilon$) $D_MF_\epsilon(x) = (D\hat{F}_\epsilon(x))M = D_yG(y, \epsilon)$ (where $x = p+My$), namely the reduced Jacobian matrix of $F_\epsilon$ w.r.t. $M$ is just the Jacobian matrix of $G$ w.r.t. its first argument. As $p$ is a nondegenerate zero of $F_0$, we have $G(0,0) = F_0(p) =0$ and $D_yG(0,0) = D_MF_0(p)$ is nonsingular. We may then apply the IFT to $G$ giving an open neighbourhood $V$ of $0$ in $\mathbb{R}^k$, some $\epsilon_0>0$, and a function $\hat\phi\colon (-\epsilon_0,\epsilon_0) \to \mathbb{R}^k$ s.t.
\[
\{(y,\epsilon) \in V \times (-\epsilon_0,\epsilon_0)\,|\, G(y,\epsilon)=0\} = \{(\hat\phi(\epsilon),\epsilon)\,|\, \epsilon \in (-\epsilon_0,\epsilon_0)\}\,.
\] 
The result follows with $\phi = p+M\hat\phi$. 
\end{proof}

\begin{lemma1}[Matrices with Hurwitz diagonal blocks]
\label{lemHurwitz}
Let $\epsilon \in (0, \epsilon_0)$ for some $\epsilon_0 > 0$ and suppose we have an $\epsilon$-dependent family of matrices partitioned into four blocks
\[
M(\epsilon) = \left(\begin{array}{cc}A(\epsilon)&B(\epsilon)\\C(\epsilon)&D(\epsilon)\end{array}\right)\,,
\]
with $A$ and $D$ square. Suppose further that: (i) $A_0:=\lim_{\epsilon \to 0+}A(\epsilon)$ is defined and Hurwitz; (ii) $\lim_{\epsilon \to 0+}\epsilon B(\epsilon)$ and $\lim_{\epsilon \to 0+}\epsilon C(\epsilon)$ are zero matrices; (iii) $D_0 := \lim_{\epsilon \to 0+}\epsilon D(\epsilon)$ is defined and Hurwitz. Then there exists $\epsilon' > 0$ s.t. $M(\epsilon)$ is Hurwitz for all $\epsilon \in (0, \epsilon')$. 
\end{lemma1}
\begin{proof}
  Let $A$ be $n \times n$ and $D$ be $m \times m$. We calculate
\begin{equation}
\label{eqspeca}
\lim_{\epsilon \to 0+}\mathrm{det}(\epsilon M(\epsilon)-\lambda I) = \left|\begin{array}{cc}-\lambda I&0\\0&D_0-\lambda I\end{array}\right| = (-\lambda)^n\mathrm{det}(D_0-\lambda I)\,.
\end{equation}
By the continuous dependence of eigenvalues of a matrix on its entries, (\ref{eqspeca}) tells us that $m$ eigenvalues of $\epsilon M(\epsilon)$ approach the eigenvalues of $D_0$, and so there exist $\theta > 0$ and $\epsilon_1>0$ s.t. for $0 < \epsilon < \epsilon_1$ $m$ eigenvalues of $\epsilon M(\epsilon)$ lie in $\mathbb{C}_-\backslash B_\theta$, where $B_\theta := \{z\in\mathbb{C}\,\colon\, |z| \leq \theta\}$.

On the other hand, expanding $\mathrm{det}(\epsilon M(\epsilon) - \epsilon \lambda I)$ along the top $n$ rows gives:
\begin{equation}
\label{eqspecb}
\epsilon^{n+m}\mathrm{det}(M(\epsilon)-\lambda I) = \mathrm{det}(\epsilon M(\epsilon) - \epsilon \lambda I) = \epsilon^nG_\epsilon(\lambda)\,,
\end{equation}
where $G_\epsilon(\lambda) := \mathrm{det}(\epsilon D(\epsilon))\mathrm{det}\left(A(\epsilon)-\lambda I\right)+O(\epsilon)$. Namely, eigenvalues of $M(\epsilon)$ are the zeros of $G_\epsilon(\lambda)$. But $G_0(\lambda):=\lim_{\epsilon \to 0+}G_\epsilon(\lambda) = \mathrm{det}D_0\,\mathrm{det}\left(A_0-\lambda I\right)$, so (since $\mathrm{det}\,D_0 \neq 0$) the zeros of $G_0$ are the eigenvalues of $A_0$. Consider any contour $C$ in $\mathbb{C}_-$ with the spectrum of $A_0$ in its interior. Choose $\epsilon_2>0$ such that $0<\epsilon < \epsilon_2$ implies that $|G_\epsilon - G_0|<|G_0|$ on $C$, and $\sup_{z \in \mathrm{int}\,C}\epsilon |z| < \theta$. Then by Rouch\'e's theorem (Thm. 7.7 in \cite{Priestley} for example), $G_\epsilon$ has the same number of zeros in $C$ as $G_0$. Thus (\ref{eqspecb}) tells us that, for $0<\epsilon < \epsilon_2$, $n$ eigenvalues of $M(\epsilon)$ lie in $C$ and hence in $\mathbb{C}_-$ (in fact, it is easy to refine the argument to see that they approach the spectrum of $A_0$ as $\epsilon \to 0+$). The corresponding $n$ eigenvalues of $\epsilon M(\epsilon)$ lie in $\mathbb{C}_- \cap B_\theta$. In summary, for $0 < \epsilon < \epsilon':=\min\{\epsilon_1,\epsilon_2\}$, $\epsilon M(\epsilon)$ has $m$ eigenvalues in $\mathbb{C}_-\backslash B_\theta$ and $n$ eigenvalues in $\mathbb{C}_- \cap B_\theta$, and so is Hurwitz stable. The same clearly holds for $M(\epsilon)$.
\end{proof}

\subsection{Proofs of Theorems~\ref{thmnewdepreac}-\ref{thmnewwithopen}} In each proof below, the species $X = (X_1, \ldots, X_n)$ have corresponding concentration vector $x = (x_1, \ldots, x_n)^\mathrm{t}$. It is assumed at the outset that $\mathcal{R}$ admits MPNE and the rate vector $v(x) = (v_1(x), v_2(x), \ldots, v_{r_0}(x))^\mathrm{t}$ of $\mathcal{R}$ is fixed such that MPNE occurs. $p$ and $q$ are a pair of positive, compatible, nondegenerate equilibria of $\mathcal{R}$. The stoichiometric matrix of $\mathcal{R}$, $\Gamma$, is an $n \times r_0$ matrix with rank $r$. Columns of the matrix $\Gamma_0$ are a basis for $S:=\mathrm{im}\,\Gamma$, and $Q$ is defined via $\Gamma = \Gamma_0Q$. $S_p:=p+S$ refers to the coset of $S$ containing $p$, and by assumption $S_p = S_q$. Any new species in $\mathcal{R}'$ are termed $Y$ with concentration vector $y$.

\begin{myproof}{Theorem~\ref{thmnewdepreac}}
Let the new reaction be $a\cdot X \rightarrow a' \cdot X$ so the new stoichiometric matrix is $\Gamma' = [\Gamma|\alpha]$ where $\alpha = a'-a$. Define $c$ by $\alpha = \Gamma_0 c$. We give the new reaction mass action kinetics and set its rate constant to be $\epsilon$, so that the evolution of $\mathcal{R}'$ is governed by:
\[
\dot x = \Gamma v(x) + \epsilon \alpha x^a =: F(x;\epsilon)\,.
\]
The reduced Jacobian matrix of $F$ w.r.t. $\Gamma_0$ is
\[
D_{\Gamma_0}F(x;\epsilon) = \left(Q\,\,\,c\right)\left(\begin{array}{c}Dv(x)\\\epsilon x^a a^\mathrm{t}\mathrm{diag}(\mathbf{1}/x)\end{array}\right)\Gamma_0 = QDv(x)\Gamma_0 + \epsilon x^a ca^\mathrm{t}\mathrm{diag}(\mathbf{1}/x) \Gamma_0\,. 
\]
So $D_{\Gamma_0}F(p;0) = QDv(p)\Gamma_0$ is nonsingular as $p$ is a nondegenerate equilibrium of $\mathcal{R}$. Consequently, by Corollary~\ref{corift}, there exists, for each sufficiently small $\epsilon > 0$, an equilibrium $p_\epsilon$ on $S_p$ and such that $\lim_{\epsilon \to 0+}p_\epsilon = p$. By continuity of $D_{\Gamma_0}F$, $p_\epsilon$ is nondegenerate for sufficiently small $\epsilon > 0$, and linearly stable for $\mathcal{R}'$ if $p$ was linearly stable for $\mathcal{R}$. The same arguments apply to give a nondegenerate equilibrium $q_\epsilon$ on $S_p$ and such that $\lim_{\epsilon \to 0+}q_\epsilon = q$. As $p_\epsilon$ and $p_\epsilon$ are distinct for small $\epsilon>0$, $\mathcal{R}'$ displays MPNE, and MPSE if $p$ and $q$ were linearly stable. 
\end{myproof}

\begin{remark}[Notes on Theorem~\ref{thmnewdepreac}]
\label{remnewdepreac}
It is clear from the proof that the added reaction in Theorem~\ref{thmnewdepreac} can be given kinetics in any class provided the rate is a $C^1$ function of the concentrations and can be made arbitrarily small on any compact set in $\mathbb{R}^n_{\gg 0}$. The theorem generalises to other structurally stable objects, notably nondegenerate periodic orbits \cite{banajiCRNosci}. 
\end{remark}

\begin{myproof}{Theorem~\ref{thmopenextension}}
The stoichiometric matrix of $\mathcal{R}'$ can be taken to be $\Gamma' = [\Gamma|I]$ where $I$ is the $n \times n$ identity matrix. Let $\hat{x}$ be any point on $S_p$, and let the $i$th inflow-outflow reaction (treated as a single reversible reaction) have mass action kinetics with forward and backwards rate constants $\epsilon \hat{x}_i$ and $\epsilon$ respectively, so that $\epsilon(\hat{x}_i-x_i)$ is the rate of the $i$th added reaction. Evolution of $\mathcal{R}'$ is then governed by the ODEs $\dot x = \Gamma v(x) + \epsilon (\hat{x} - x)$. 

Let $\Gamma_0' = [\Gamma_0|T]$ where the columns of $T$ are any basis for $S^\perp$. $\Gamma_0'$ is nonsingular and hence its columns are a basis for $\mathbb{R}^n$. Define $z = (\hat{z}, \doublehat{z}) \in \mathbb{R}^{r} \times \mathbb{R}^{n-r}$ by $x = \hat{x} + \Gamma_0'z$. Define $z_p$ and $z_q$ by $p = \hat{x} + \Gamma_0'z_p$ and $q = \hat{x} + \Gamma_0'z_q$. Note that $\Gamma = \Gamma_0'\left(\begin{array}{c}Q\\0\end{array}\right)$. In this coordinate system equilibria of $\mathcal{R}'$ occur when 
\[
\Gamma v(\hat{x} + \Gamma_0'z) - \epsilon \Gamma_0'z=0\,, \quad \mbox{or equivalently,} \quad F(\hat{z}, \doublehat{z};\epsilon) := \left(\begin{array}{c}Q\\0\end{array}\right) v(\hat{x} + \Gamma_0'z) - \epsilon \left(\begin{array}{c}\hat{z}\\\doublehat{z}\end{array}\right) =0,
\]
(as $\Gamma_0'$ is nonsingular). For $\epsilon > 0$, $F$ has the same zeros as
\[
G(\hat{z}, \doublehat{z};\epsilon):=\left(\begin{array}{c}Q\\0\end{array}\right) v(\hat{x} + \Gamma_0'z) - \left(\begin{array}{c}\epsilon\hat{z}\\\doublehat{z}\end{array}\right)\,.
\]
Moreover $G(z_p;0) = G(z_q;0)=0$ (as $\doublehat{z} = 0$ at $z = z_p$, $\Gamma v(p) = 0 \Leftrightarrow Qv(p) = 0$, and $\Gamma v(q) = 0 \Leftrightarrow Qv(q) = 0$). Differentiating gives
\[
DG(z_p;0) = \left(\begin{array}{cc}QDv(p)\Gamma_0&QDv(p)T\\0&-I\end{array}\right)\,,
\]
which is nonsingular as $QDv(p)\Gamma_0$ is. By the IFT there exists, for sufficiently small $\epsilon > 0$, a $C^1$ curve $\epsilon \mapsto z_{p,\epsilon}$ of zeros of $G$ satisfying $\lim_{\epsilon \to 0+}z_{p,\epsilon} = z_p$. The same argument holds to give a $C^1$ curve $z_{q,\epsilon}$ of zeros of $G$ satisfying $\lim_{\epsilon \to 0+}z_{q,\epsilon} = z_q$. By continuity of the determinant, they are nondegenerate zeros of $G$ for sufficiently small $\epsilon>0$, and the same holds for $F$ as a quick calculation reveals that $\mathrm{det}\,DF(z;\epsilon) = \epsilon^{n-r}\mathrm{det}\,DG(z;\epsilon)$. Since $0 \ll p \neq q \gg 0$, $p_\epsilon := \hat{x} + \Gamma_0'z_{p,\epsilon}$ and $q_\epsilon := \hat{x} + \Gamma_0'z_{q,\epsilon}$ are positive and distinct for sufficiently small $\epsilon$. Thus $\mathcal{R}'$ admits MPNE. The spectrum of $DF(z_{p,\epsilon};\epsilon)$ is precisely $-\epsilon$ with multiplicity $n-r$ with the remaining $r$ eigenvalues approaching the spectrum of $Q Dv(p)\Gamma_0$ as $\epsilon \to 0+$. Thus if $p$ is a linearly stable equilibrium of $\mathcal{R}$, then $p_\epsilon$ is a linearly stable equilibrium of $\mathcal{R}'$ for sufficiently small $\epsilon>0$, and the same holds for $q$. In other words, if $\mathcal{R}$ admits MPSE, then so does $\mathcal{R}'$.
\end{myproof}

\begin{remark}[Geometric interpretation of Theorem~\ref{thmopenextension}]
Inflows and outflows were chosen to guarantee that $S_p$ remained locally invariant for $\mathcal{R}'$ and that for sufficiently small $\epsilon > 0$, the vector field of $\mathcal{R}'$ restricted to $S_p$ was $\epsilon$-close to that of $\mathcal{R}$ restricted to $S_p$ in a natural sense. Nondegeneracy of $p$ and $q$ for $\mathcal{R}$ ensured existence and nondegeneracy w.r.t. $S_p$ of $p_\epsilon$ and $q_\epsilon$ for sufficiently small $\epsilon >0$. Moreover the asymptotic stability of $S_p$ for $\epsilon > 0$ guaranteed nondegeneracy of $p_\epsilon$ and $q_\epsilon$ w.r.t. directions transverse to $S_p$. 
\end{remark}

\begin{myproof}{Theorem~\ref{thmtrivial}}
Let $\mathcal{R}'$ have reaction rates $w(x,y)$. Choose $w$ so that $w_j(x, y) = y^{s_j}v_j(x)$ where $s_j$ is the stoichiometry of $Y$ on the left of reaction $j$. We see that
\[
\Gamma' = \left(\begin{array}{c}\Gamma\\0\end{array}\right) \quad \mbox{and} \quad \Gamma_0' = \left(\begin{array}{c}\Gamma_0\\0\end{array}\right)
\]
are the stoichiometric matrix of $\mathcal{R}'$ and a matrix whose columns are a basis for the stoichiometric subspace of $\mathcal{R}'$ respectively. $\Gamma' = \Gamma_0'Q$ and the evolution of $\mathcal{R}'$ is governed by 
\[
\left(\begin{array}{c}\dot x\\ \dot y\end{array}\right) = \Gamma'w(x,y) =: F(x,y).
\] 
Since $w(p,1) = v(p)$ and $\Gamma v(p) = 0$, $\Gamma'w(p,1) = 0$. So $(p,1)$ is a positive equilibrium of $\mathcal{R}'$, and likewise for $(q,1)$. Moreover these equilibria are compatible as $(p,1) - (q,1) = (p-q,0) \in \mathrm{im}\,\Gamma \times \{0\} = \mathrm{im}\,\Gamma'$. Nondegeneracy is easy to confirm as
\begin{equation}
\label{eqtriv}
D_{\Gamma_0'}F(p,1) = Q\,Dw(p,1)\,\Gamma_0' = Q\,(Dv(p), D_yw(p,1))\left(\begin{array}{c}\Gamma_0\\0\end{array}\right) = QDv(p)\Gamma_0\,.
\end{equation}
Recall that nondegeneracy of $p$ for $\mathcal{R}$ is equivalent to nonsingularity of $QDv(p)\Gamma_0$, and so $(p,1)$ is a nondegenerate equilibrium of $\mathcal{R}'$. A similar argument holds for $(q,1)$, and so $\mathcal{R}'$ displays MPNE. If $p$ is linearly stable w.r.t. $\mathrm{im}\,\Gamma$ then, by (\ref{eqtriv}), $(p,1)$ is linearly stable w.r.t. $\mathrm{im}\,\Gamma'$, and the same holds for $q$ and $(q,1)$. Thus if $\mathcal{R}$ admits MPSE then so does $\mathcal{R}'$.
\end{myproof}

\begin{remark}[Notes on Theorem~\ref{thmtrivial}]
\label{remkintriv}
In Theorem~\ref{thmtrivial}, the vector field of $\mathcal{R}'$ restricted to $\{(x,y): y = 1\}$ is precisely that of $\mathcal{R}$, and so not just MPNE, but all dynamical behaviours of $\mathcal{R}$ are reproduced on this set. This can be achieved with any class of kinetics where we can ensure that $w(x,1) = v(x)$ including, of course, mass action (see the discussion of ``species-extensions'' in \cite{banajiCRNosci}).
\end{remark}

\begin{myproof}{Theorem~\ref{thmnewwithopen}}
Let $s_i$ be the net stoichiometry change of $Y$ in the $i$th reaction and $s := (s_1, s_2, \ldots, s_{r_0})$. Give the new reaction $0 \rightleftharpoons Y$ mass action kinetics with both rate constants set to be $\frac{1}{\epsilon}$. The new rates $w$ for the other reactions are set as in Theorem~\ref{thmtrivial} (consistent with, but not assuming, mass action kinetics for $\mathcal{R}$ and $\mathcal{R}'$), so that $w(x,y)$ satisfies $w(x,1) = v(x)$, and $D_xw(x, 1) = Dv(x)$. Define
\[
\Gamma_0' := \left(\begin{array}{cc}\Gamma_0&0\\0&1\end{array}\right), \quad Q' := \left(\begin{array}{cc}Q&0\\\epsilon s & 1\end{array}\right)\,, \quad Q'_1 := \left(\begin{array}{cc}Q&0\\s & 1\end{array}\right)\,.
\]
$\Gamma' = \Gamma_0'Q'_1$ is the stoichiometric matrix of $\mathcal{R}'$. Let $S':=\mathrm{im}\,\Gamma' (=\mathrm{im}\,\Gamma_0')$. $\mathcal{R}'$ gives rise to the following singularly perturbed system:
\[
\left(\begin{array}{c}\dot x\\\dot y\end{array}\right) = \left(\begin{array}{cc}\Gamma&0\\s&1\end{array}\right)\left(\begin{array}{c} w(x,y)\\\frac{1}{\epsilon}(1-y)\end{array}\right) = \Gamma_0' Q'_1\left(\begin{array}{c} w(x,y)\\\frac{1}{\epsilon}(1-y)\end{array}\right) =:  F(x,y,\epsilon)\,.
\] 
For $\epsilon > 0$, $F(x,y;\epsilon)$ has the same zeros as:
\[
G(x,y;\epsilon):= \left(\begin{array}{cc}I&0\\0&\epsilon\end{array}\right)F(x,y;\epsilon) = \Gamma_0'Q'\left(\begin{array}{c} w(x,y)\\1-y\end{array}\right) = \left(\begin{array}{cc}\Gamma&0\\\epsilon s&1\end{array}\right)\left(\begin{array}{c} w(x,y)\\1-y\end{array}\right)\,.
\]
Moreover $G(p,1;0) = G(q,1;0) = 0$ and we can calculate:
\begin{eqnarray*}
D_{\Gamma_0'}G(x,y,\epsilon) 
&=& \left(\begin{array}{cc}QD_xw(x,y)\Gamma_0&QD_yw(x,y)\\\epsilon s D_xw(x,y)\Gamma_0 & \epsilon s D_yw(x,y)-1\end{array}\right)\,.
\end{eqnarray*}
Using the fact that $D_xw(p,1) = Dv(p)$, we calculate $J_{S'}G(p,1,0) = -\mathrm{det}(QDv(p)\Gamma_0) \neq 0$. Thus 
Corollary~\ref{corift} gives, for sufficiently small $\epsilon > 0$, a $C^1$ curve $p_\epsilon := (x(\epsilon), y(\epsilon))$ of zeros of $G$ on $(p,1)+\mathrm{im}\Gamma'$, such that $\lim_{\epsilon \to 0+}(x(\epsilon), y(\epsilon)) = (p,1)$. Positivity of $p_\epsilon$ for sufficiently small $\epsilon$ is immediate as $(p,1) \gg 0$. Nondegeneracy of $p_\epsilon$ for sufficiently small $\epsilon > 0$ follows from continuity of $J_{S'}G(x(\epsilon),y(\epsilon);\epsilon)$ and the fact that $J_{S'}F(x,y;\epsilon) = \frac{1}{\epsilon}\,J_{S'}G(x,y;\epsilon)$. In a similar way we have, for sufficiently small $\epsilon > 0$, a curve $q_\epsilon$ of nondegenerate positive equilibria of $\mathcal{R}'$ on $(q,1)+S'$, and such that $\lim_{\epsilon \to 0+}q_\epsilon = (q,1)$. As $(p,1)-(q,1) = (p-q,0) \in \mathrm{im}\,\Gamma'$, $p_\epsilon$ and $q_\epsilon$ are compatible equilibria of $\mathcal{R}'$. Thus $\mathcal{R}'$ admits MPNE.

If $p$ is linearly stable for $\mathcal{R}$, then $p_\epsilon$ is linearly stable for $\mathcal{R}'$ for sufficiently small $\epsilon > 0$. Define
\[
M(\epsilon) :=D_{\Gamma_0'}F(x(\epsilon),y(\epsilon),\epsilon) = \left(\begin{array}{cc}QD_xw(x(\epsilon),y(\epsilon))\Gamma_0&QD_yw(x(\epsilon),y(\epsilon))\\s D_xw(x(\epsilon),y(\epsilon))\Gamma_0 & s D_yw(x(\epsilon),y(\epsilon))-\frac{1}{\epsilon}\end{array}\right)\,.
\]
We can check quickly that if $p$ is linearly stable for $\mathcal{R}$, namely $\lim_{\epsilon \to 0+} QD_xw(x(\epsilon),y(\epsilon))\Gamma_0 = QD_xw(p,1)\Gamma_0 = QDv(p)\Gamma_0$ is Hurwitz, then the hypotheses of Lemma~\ref{lemHurwitz} are satisfied, and for sufficiently small $\epsilon >0$ all eigenvalues of $M(\epsilon)$ lie in $\mathbb{C}_{-}$. A similar argument applies to $q_\epsilon$ and so, if $\mathcal{R}$ admits MPSE, then so does $\mathcal{R}'$.
\end{myproof}

\begin{remark}[Kinetic assumptions and extensions in Theorem~\ref{thmnewwithopen}]
\label{remnewwithopen}
The kinetic assumptions in Theorem~\ref{thmnewwithopen} can be greatly weakened as seen in the analogous result on periodic orbits in \cite{banajiCRNosci}.
\end{remark}

\subsection{Proofs of Theorems~\ref{thmblockadd}~and~\ref{thmintermediates}}
We begin with a lemma useful in both proofs:
\begin{lemma1}
\label{lemnondegen}
Let $\Gamma$ be an $n \times r_0$ matrix, $\Gamma_0$ be an $n \times r$ matrix whose columns are a basis for $\mathrm{im}\,\Gamma$, and $Q$ a matrix defined by $\Gamma = \Gamma_0Q$. Let $\alpha$ be an $n \times m$ matrix, and $\beta$ a $k \times m$ matrix with rank $m$. Let $v\colon\mathbb{R}^n_{\gg 0} \to \mathbb{R}^{r_0}$ and $f\colon \mathbb{R}^n_{\gg 0} \times \mathbb{R}^k \times \mathbb{R}\to \mathbb{R}^m$ be some $C^1$ functions. Define $\Gamma'$ and $F\colon \mathbb{R}^n_{\gg 0} \times \mathbb{R}^k \times \mathbb{R}\to \mathbb{R}^n \times \mathbb{R}^k$ by
\[
\Gamma' := \left(\begin{array}{cc}\Gamma&\alpha\\0&\beta\end{array}\right)\,\quad \mbox{and}\quad
F(x, y;\epsilon) := \Gamma'\left(\begin{array}{c}v(x)\\f(x,y;\epsilon)\end{array}\right)\,.
\]
Clearly $\mathrm{im}\,F \subseteq \mathrm{im}\,\Gamma'$. Suppose there is a $C^1$ curve $(0, \epsilon_0) \ni \epsilon \mapsto p_\epsilon := (x_\epsilon, y_\epsilon) \in \mathbb{R}^n_{\gg 0} \times \mathbb{R}^k_{\gg 0}$ satisfying $\lim_{\epsilon \to 0+}(x_\epsilon, y_\epsilon) = (p,\underline{y})$ for some $p \in \mathbb{R}^n_{\gg 0}$, $\underline{y} \in \mathbb{R}^k_{\geq 0}$. Define $A_\epsilon := \epsilon D_xf(x_\epsilon, y_\epsilon; \epsilon)$ and $B_\epsilon := \epsilon D_yf(x_\epsilon, y_\epsilon; \epsilon)$. Assume that $\lim_{\epsilon \to 0+}A_\epsilon = 0$, and that $N_0:=\lim_{\epsilon\to 0+}B_\epsilon\beta$ is defined.
\begin{enumerate}[align=left,leftmargin=*]
\item[(a)] Suppose that (i) $J_{\mathrm{im}\,\Gamma}\Gamma v(p) \neq 0$, and (ii) $N_0$ is nonsingular. Then there exists $\epsilon'>0$ s.t., for $\epsilon \in (0,\epsilon')$, $J_{\mathrm{im}\,\Gamma'}F(x_\epsilon, y_\epsilon;\epsilon) \neq 0$.
\item[(b)] Suppose that (i) $D_{\mathrm{im}\,\Gamma}\Gamma v(p)$ is Hurwitz stable, and (ii) $N_0$ is Hurwitz stable. Then there exists $\epsilon'>0$ s.t., for $\epsilon \in (0,\epsilon')$, $D_{\mathrm{im}\,\Gamma'}F(x_\epsilon, y_\epsilon;\epsilon)$ is Hurwitz stable.
\end{enumerate}
\end{lemma1}

\begin{proof}
Define \[
\Gamma_0' := \left(\begin{array}{cc}\Gamma_0&\alpha\\0&\beta\end{array}\right), \quad Q' := \left(\begin{array}{cc}Q&0\\0&I\end{array}\right)
\]
and note that $\Gamma_0'$ has rank $r+m$ and that $\Gamma' = \Gamma_0'Q'$. Let $M_\epsilon := QDv(x_\epsilon)\Gamma_0$. We are interested in the reduced Jacobian matrix:
\[
D_{\Gamma_0'}F(x_\epsilon,y_\epsilon;\epsilon) = Q'\left(\begin{array}{ccc}Dv(x_\epsilon)&0\\\frac{1}{\epsilon}A_\epsilon&\frac{1}{\epsilon}B_\epsilon\end{array}\right)\Gamma_0' = \left(\begin{array}{cc}M_\epsilon&QDv(x_\epsilon)\alpha\\\frac{1}{\epsilon}A_\epsilon\Gamma_0&\frac{1}{\epsilon}A_\epsilon\alpha + \frac{1}{\epsilon}B_\epsilon\beta\end{array}\right)\,,
\]
which is a representative of $D_{\mathrm{im}\,\Gamma'}F(x_\epsilon, y_\epsilon;\epsilon)$.
\begin{enumerate}[align=left,leftmargin=*]
\item[(a)] If $M_0 := QDv(p)\Gamma_0$ is nonsingular then, as $v$ is $C^1$, there exists $\epsilon_1 > 0$ s.t. $M_\epsilon$ is nonsingular for $\epsilon \in (0,\epsilon_1)$. Then as $\epsilon \to 0+$, by the Schur determinant formula (p46 of \cite{gantmacher}, for example):
\[
\epsilon^mJ_{\Gamma_0'}F(x_\epsilon,y_\epsilon; \epsilon) = \mathrm{det}\,\left[A_\epsilon\alpha + B_\epsilon \beta- A_\epsilon\Gamma_0(M_\epsilon)^{-1}QDv(x_\epsilon)\alpha\right]\,\mathrm{det}(M_\epsilon) \to \mathrm{det}\,N_0\mathrm{det}\,M_0 \neq 0,
\]
as $N_0$ is nonsingular. Consequently, there exists $\epsilon' \leq \min\{\epsilon_0, \epsilon_1\}$ s.t. $J_{\mathrm{im}\Gamma'}F(x_\epsilon,y_\epsilon; \epsilon) \neq 0$ for $0<\epsilon < \epsilon'$. 
\item[(b)] If $N_0$ and $QDv(p)\Gamma_0$ are Hurwitz stable, then the assumptions of Lemma~\ref{lemHurwitz} are satisfied, and $D_{\mathrm{im}\Gamma'}F(x_\epsilon,y_\epsilon; \epsilon)$ is Hurwitz stable.
\end{enumerate}
This completes proof of the lemma.
\end{proof}

\begin{myproof}{Theorem~\ref{thmblockadd}}
Let the $m$ added reversible reactions be
\[
a_i\cdot X +b_i\cdot Y \rightleftharpoons a_i'\cdot X + b_i'\cdot Y,\quad (i = 1, \ldots, m)\,.
\]
Define $a = (a_1| a_2|\cdots|a_m)$, with $a'$, $b$ and $b'$ defined similarly. Define $\alpha = a'-a$ and $\beta = b'-b$. $\alpha$ is now an $n \times m$ matrix, and $\beta$ is a $k \times m$ matrix with rank $m$ by assumption, implying $k \geq m$. W.l.o.g., namely by reordering the species of $Y$ if necessary, let the first $m$ rows of $\beta$ form a nonsingular square matrix so that $\beta = \left(\begin{array}{c}\hat{\beta}\\\doublehat{\beta}\end{array}\right)$, where $\hat{\beta}$ is now a nonsingular $m \times m$ matrix, and $\doublehat{\beta}$ is a $(k-m) \times m$ matrix. We have
\[
\Gamma_0' := \left(\begin{array}{cc}\Gamma_0&\alpha\\0&\beta\end{array}\right),\quad Q' := \left(\begin{array}{cc}Q&0\\0&I\end{array}\right), \quad \mbox{and}\quad \Gamma' = \Gamma_0'Q' = \left(\begin{array}{cc}\Gamma&\alpha\\0&\beta\end{array}\right)\,,
\]
where $\Gamma'$ is the stoichiometric matrix of $\mathcal{R}'$ and $\Gamma_0'$ is a basis for $S':=\mathrm{im}\,\Gamma'$. Choose mass action kinetics for the added reactions and give the $j$th reaction forward and backward rate constants $k_j$ and $k_{-j}$ respectively. Setting $k_+ := (k_1, k_2, \ldots, k_m)^\mathrm{t}$ and $k_- := (k_{-1}, k_{-2}, \ldots, k_{-m})^\mathrm{t}$, the dynamics of the new system takes the form
\[
\left(\begin{array}{c}\dot x\\\dot y\end{array}\right) = \left(\begin{array}{cc}\Gamma&\alpha\\0&\beta\end{array}\right)\left(\begin{array}{c}v(x)\\\hat{f}(x,y)\end{array}\right)\,,\quad \mbox{where}\quad \hat{f}(x,y) := k_+\circ x^{a^\mathrm{t}}\circ y^{b^\mathrm{t}} - k_{-}\circ x^{{a'}^\mathrm{t}}\circ y^{{b'}^\mathrm{t}}\,.
\]
As $\beta$ has rank $m$ and hence represents an injective linear transformation, positive equilibria of $\mathcal{R}'$ are precisely the solutions of $\Gamma v(x)=0, \hat{f}(x,y)=0$, namely of $\Gamma v(x) = 0, \,\,y^{\beta^\mathrm{t}} = K \circ x^{-\alpha^\mathrm{t}}$, where $K = k_+/k_-$. Write $y = (\hat{y}, \doublehat{y}) \in \mathbb{R}^m \times \mathbb{R}^{k-m}$ so that $y^{\beta^\mathrm{t}} = K \circ x^{-\alpha^\mathrm{t}}$ can be written $\hat{y}^{\hat{\beta}^\mathrm{t}}\circ \doublehat{y}^{\hat{\hat{\beta}}^\mathrm{t}} = K\circ x^{-\alpha^\mathrm{t}}$. Taking logs gives:
\[
\hat{\beta}^\mathrm{t}\ln \hat{y} = \ln K -\alpha^\mathrm{t} \ln x - \doublehat{\beta}^\mathrm{t} \ln \doublehat{y}\,.
\]
We multiply through by $(\hat{\beta}^\mathrm{t})^{-1}$ which is defined by assumption, and let $\gamma := -(\alpha\,\hat{\beta}^{-1})^\mathrm{t}$, $\delta :=  -(\doublehat{\beta}\,\hat{\beta}^{-1})^\mathrm{t}$. Fix $k_+(\epsilon):=\bm{\epsilon}^{-\hat{b}^\mathrm{t}}$ and $k_-(\epsilon):=\bm{\epsilon}^{-{\hat{b}'}^\mathrm{t}}$ where $\hat{b}$ and $\hat{b}'$ refer to the top $m \times m$ submatrices of $b$ and $b'$ respectively. Setting $K=K(\epsilon):=k_+(\epsilon)/k_-(\epsilon) = \bm{\epsilon}^{\hat{\beta}^\mathrm{t}}$ and exponentiating again gives, at equilibrium:
\[
\hat{y} = \epsilon x^\gamma\circ \doublehat{y}^\delta\,.
\]
Define $g \colon \mathbb{R}^n_{\gg 0} \times \mathbb{R}^m \times \mathbb{R}^{k-m}_{\gg 0} \times \mathbb{R} \to \mathbb{R}^m$ by $g(x, \hat{y}, \doublehat{y};\epsilon) = \hat{y} - \epsilon x^\gamma\circ \doublehat{y}^\delta$. With $\epsilon>0$ fixed and rate constants chosen as above, positive equilibria of $\mathcal{R}'$ occur precisely at 
\[
0 = G(x, \hat{y}, \doublehat{y};\epsilon) := \left(\begin{array}{cc}\Gamma&\alpha\\0&\hat{\beta}\\0&\doublehat{\beta}\end{array}\right)\left(\begin{array}{c}v(x)\\ g(x, \hat{y}, \doublehat{y};\epsilon)\end{array}\right)\,.
\]
Note that $(p,0,\mathbf{1};0)$ and $(q,0,\mathbf{1};0)$ are in the domain of $G$ (taken to be that of $g$), which is open, and that $G(p,0,\mathbf{1};0) = G(q,0, \mathbf{1};0)=0$. We compute:
\begin{eqnarray*}
D_{\Gamma_0'}G(x,\hat{y}, \doublehat{y};\epsilon)
&=& \left(\begin{array}{cc}QDv(x)\Gamma_0&QDv(x)\alpha\\D_xg\Gamma_0&D_xg\alpha + D_{\hat{y}}g\hat{\beta}+D_{\hat{\hat{y}}}g\doublehat{\beta}\end{array}\right)\,.
\end{eqnarray*}
Now $D_{\hat{y}}g$ is the $m \times m$ identity matrix and we confirm that $D_xg(p, 0, \mathbf{1};0) = 0$ and $D_{\hat{\hat{y}}}g(p, 0, \mathbf{1};0) = 0$. So, by the nonsingularity of $QDv(p)\Gamma_0$ and of $\hat{\beta}$, 
\[
J_{S'}G(p,0, \mathbf{1};0) = \mathrm{det}(D_{\Gamma_0'}G(p,0, \mathbf{1};0)) = \mathrm{det}(QDv(p)\Gamma_0)\mathrm{det}\,\hat{\beta} \neq 0\,.
\]
Corollary~\ref{corift} gives, for sufficiently small $\epsilon>0$, a $C^1$ curve $\epsilon \mapsto p_\epsilon := (x(\epsilon), \hat{y}(\epsilon), \doublehat{y}(\epsilon))$ of zeros of $G$ on $(p,0,\mathbf{1})+S'$ such that $\lim_{\epsilon \to 0+}(x(\epsilon), \hat{y}(\epsilon), \doublehat{y}(\epsilon)) = (p,0,\mathbf{1})$. For sufficiently small $\epsilon > 0$, $x(\epsilon)\gg 0$ (as $x(0) = p \gg 0$) and $\doublehat{y}(\epsilon)\gg 0$ (as $\doublehat{y}(0) = \mathbf{1} \gg 0$), and consequently $\hat{y}(\epsilon) \gg 0$, since 
\begin{equation}
\label{eqyhat}
\hat{y}(\epsilon) = \epsilon x(\epsilon)^\gamma\circ \doublehat{y}(\epsilon)^\delta.
\end{equation}
Thus each $p_\epsilon$ is a positive equilibrium of $\mathcal{R}'$. An identical argument replacing $p$ with $q$ gives a curve of positive equilibria $q_\epsilon$ on $(q,0,\mathbf{1})+S'$ approaching $(q,0,\mathbf{1})$ as $\epsilon \to 0+$. Since $(p,0,\mathbf{1}) - (q, 0, \mathbf{1}) = (p-q, 0, 0) \in S'$, $(p,0,\mathbf{1})$ and $(q, 0, \mathbf{1})$ are compatible, and hence $p_\epsilon$ and $q_\epsilon$ are compatible. 

We now infer nondegeneracy of $p_\epsilon$ using Lemma~\ref{lemnondegen}(a). Define 
\[
f(x,y;\epsilon):=k_+(\epsilon)\circ x^{a^\mathrm{t}}\circ y^{b^\mathrm{t}} - k_{-}(\epsilon)\circ x^{{a'}^\mathrm{t}}\circ y^{{b'}^\mathrm{t}} \quad \mbox{and} \quad F(x,y;\epsilon) := \Gamma'\left(\begin{array}{c}v(x)\\f(x,y;\epsilon)\end{array}\right)\,.
\] 
Define $T\colon (0, \epsilon_0) \to \mathbb{R}^m$ by $T(\epsilon) = k_+(\epsilon)\circ x(\epsilon)^{a^\mathrm{t}}\circ y(\epsilon)^{b^\mathrm{t}}$. Then, using (\ref{eqyhat}), we compute that $T_0:=\lim_{\epsilon\to 0+}T(\epsilon) = p^{a^\mathrm{t}+\hat{b}^\mathrm{t}\gamma} \gg 0$. Observe that 
\[
\begin{array}{l}
A_\epsilon:=\epsilon D_xf(x(\epsilon), y(\epsilon); \epsilon) = -\epsilon \mathrm{diag}(T(\epsilon))\alpha^\mathrm{t}\mathrm{diag}(\mathbf{1}/x(\epsilon)),\\
B_\epsilon:=\epsilon D_yf(x(\epsilon), y(\epsilon); \epsilon) = -\mathrm{diag}(T(\epsilon))\beta^\mathrm{t}\mathrm{diag}(\bm{\epsilon}/y(\epsilon)).
\end{array}
\] 
Set $N_1:=\mathrm{diag}(T_0)$, $C_0 := \mathrm{det}(N_1)$, $N_2:= \lim_{\epsilon \to 0+}\mathrm{diag}(\bm{\epsilon}/y(\epsilon))$ (which, by (\ref{eqyhat}), is defined and nonnegative), and $N_0:=\lim_{\epsilon \to 0+}B_\epsilon\beta =-N_1\beta^\mathrm{t}N_2\beta$. Note that $C_0 > 0$. We compute that, as required in Lemma~\ref{lemnondegen}, $A_\epsilon \to 0 \,\,\mbox{as}\,\, \epsilon \to 0+$ since $\mathrm{diag}(T(\epsilon)) \to N_1$ as $\epsilon \to 0+$ and $\mathrm{diag}(\mathbf{1}/x(\epsilon)) \to \mathrm{diag}(\mathbf{1}/p)$ as $\epsilon\to 0+$. By (\ref{eqyhat}), $K' :=N_2[\langle m \rangle] = p^{-\mathbf{1}^\mathrm{t}\gamma} > 0$, while $N_2[\theta] = 0$ for $\theta \subseteq \langle k \rangle$, $|\theta|=m$, $\theta \neq \langle m \rangle$. Applying the Cauchy-Binet formula (\cite{gantmacher} for example) to $N_0$ gives,
\[
\mathrm{det}(N_0) = (-1)^mC_0\sum_{\theta \subseteq \langle k \rangle,\,\, |\theta| = m}N_2[\theta]\beta[\theta|\langle m \rangle]^2 = (-1)^mC_0K'\beta[\langle m \rangle]^2\neq 0,
\]
and so $N_0$ is nonsingular. All the conditions of Lemma~\ref{lemnondegen}(a) are met, and we conclude that for each sufficiently small $\epsilon>0$, $p_\epsilon$ is a nondegenerate equilibrium of $\mathcal{R}'$. An identical argument applies to $q_\epsilon$. Thus $\mathcal{R}'$ admits MPNE.

To see inheritance of MPSE observe that $N_0 = -N_1^{1/2}AA^\mathrm{t}N_1^{-1/2}$, where $A := N_1^{1/2}\beta^\mathrm{t}N_2^{1/2}$. $N_0$ is hence similar to the negative semidefinite matrix $-AA^\mathrm{t}$. As $N_0$ has already been shown to be nonsingular, it must in fact be Hurwitz. If $p$ is linearly stable for $\mathcal{R}$, namely $D_{\mathrm{im}\,\Gamma}\Gamma v(p)$ is Hurwitz, then the conditions of Lemma~\ref{lemnondegen}(b) are met, and we conclude that for each sufficiently small $\epsilon$, $p_\epsilon$ is linearly stable for $\mathcal{R}'$. An identical argument applies to $q_\epsilon$ and so, if $\mathcal{R}$ admits MPSE, then so does $\mathcal{R}'$.
\end{myproof}

\begin{myproof}{Theorem~\ref{thmintermediates}}
The proof is similar to that of Theorem~\ref{thmblockadd}. Recall that we create $\mathcal{R}'$ from $\mathcal{R}$ by replacing each of the  $m$ reactions:
\[
a_i \cdot X \rightarrow b_i \cdot X\quad \mbox{with a chain}\quad a_i \cdot X \rightarrow c_i \cdot X + \beta_i\cdot Y \rightarrow b_i \cdot X,\,\,(i=1,\ldots,m)\,.\]
As we are assuming that $\beta:=(\beta_1|\beta_2|\cdots|\beta_m)$ has rank $m$, as in the proof of Theorem~\ref{thmblockadd} we may write $\beta = \left(\begin{array}{c}\hat{\beta}\\\doublehat{\beta}\end{array}\right)$ where $\hat{\beta}$ is a nonsingular $m \times m$ matrix, and $\doublehat{\beta}$ is a $(k-m) \times m$ matrix. Also w.l.o.g (namely, by writing a reaction as multiple reactions and/or reordering the reactions of $\mathcal{R}$ as necessary) let the reaction $a_i \cdot X \rightarrow b_i\cdot X$ figure as the $i$th reaction in the original CRN, proceeding with rate $v_i(x)$, and let $\underline{v}(x) = (v_1(x), \ldots, v_m(x))^\mathrm{t}$. Set the rate of each new reaction $a_i \cdot X \rightarrow c_i \cdot X + \beta_i \cdot Y$ to be $v_i(x)$ (this is consistent with mass action kinetics if the original reaction had mass action kinetics) and set the second added reaction in the $i$th chain to have mass action kinetics with rate constant $k_i$ ($i = 1, \ldots, m$). Define $k := (k_1, \ldots, k_m)^\mathrm{t}$ and $c:=(c_1|c_2|\cdots|c_m)$. Note that for $x \gg 0$, $v_i(x) \gg 0$ under the assumption of positive general kinetics, and in particular $v_i(p) \gg 0$. Define $\alpha_i := c_i - b_i$, $(i = 1, \ldots, m)$, $\alpha := (\alpha_1|\alpha_2|\cdots |\alpha_m)$,
\[
\hat{f}(x,y) := \underline{v}(x) - k \circ x^{c^\mathrm{t}} \circ y^{\beta^\mathrm{t}},\quad\Gamma_0' := \left(\begin{array}{cc}\Gamma_0&\alpha\\0&\beta\end{array}\right)\quad \mbox{and}\quad Q' := \left(\begin{array}{cc}Q&0\\0&I\end{array}\right)
\]
so that $\Gamma' = \Gamma_0'Q'$. $\Gamma'$ is the stoichiometric matrix of $\mathcal{R}'$ and the columns of $\Gamma_0'$ are a basis for $S':=\mathrm{im}\,\Gamma'$. The dynamics of $\mathcal{R}'$ is governed by
\[
\left(\begin{array}{c}\dot x\\\dot y\end{array}\right) = \Gamma'\left(\begin{array}{c}v(x)\\\hat{f}(x,y)\end{array}\right) = \left(\begin{array}{cc}\Gamma&\alpha\\0&\beta\end{array}\right)\left(\begin{array}{c}v(x)\\\hat{f}(x,y)\end{array}\right)\,.
\]
Thus, with the kinetic choices made so far, the net effect on the vector field of ``inserting'' the complexes $c_i \cdot X + \beta_i\cdot Y$ ($i=1,\ldots,m$) is the same adding to $\mathcal{R}$ $m$ new (pseudo)-reactions with stoichiometric matrix $\left(\begin{array}{c}\alpha\\\beta\end{array}\right)$ and rate vector $\underline{v}(x)-k \circ x^{c^\mathrm{t}}\circ y^{\beta^\mathrm{t}}$. As $\beta$ has rank $m$ (and hence corresponds to an injective linear transformation), positive equilibria of $\mathcal{R}'$ are solutions of $\Gamma v(x) = 0, \,\,\hat{f}(x,y)=0$. Write $y = (\hat{y}, \doublehat{y}) \in \mathbb{R}^m \times \mathbb{R}^{k-m}$. The condition $\hat{f}(x,y)=0$ reads $\underline{v}(x)-k \circ x^{c^\mathrm{t}}\circ \hat{y}^{\hat{\beta}^\mathrm{t}}\circ \doublehat{y}^{\hat{\hat{\beta}}^\mathrm{t}}=0$. We now perform some manipulations similar to those in the proof of Theorem~\ref{thmblockadd}. Setting $k = k(\epsilon):= \bm{\epsilon}^{-\hat{\beta}^\mathrm{t}}$, and defining $\gamma := -(c(\hat{\beta})^{-1})^\mathrm{t}$, $\delta := -(\doublehat{\beta}(\hat{\beta})^{-1})^\mathrm{t}$, and $V(x) := (\underline{v}(x))^{(\hat{\beta}^{-1})^\mathrm{t}}$, we end up with, at equilibrium, 
\[
\hat{y} = \epsilon V(x) \circ x^\gamma \circ \doublehat{y}^\delta\,.
\]
Define $g \colon \mathbb{R}^n_{\gg 0} \times \mathbb{R}^m \times \mathbb{R}^{k-m}_{\gg 0} \times \mathbb{R} \to \mathbb{R}^m$ by $g(x, \hat{y}, \doublehat{y}; \epsilon) = \hat{y} - \epsilon V(x) \circ x^\gamma \circ \doublehat{y}^\delta$. Then, for $\epsilon > 0$ and reaction rates chosen as above, positive equilibria of $\mathcal{R}'$ occur precisely at 
\[
0 = G(x, \hat{y}, \doublehat{y};\epsilon) := \left(\begin{array}{cc}\Gamma&\alpha\\0&\hat{\beta}\\0&\doublehat{\beta}\end{array}\right)\left(\begin{array}{c}v(x)\\ g(x, \hat{y}, \doublehat{y};\epsilon)\end{array}\right)\,.
\]
Note that $(p,0,\mathbf{1};0)$ and $(q,0,\mathbf{1};0)$ are in the domain of $G$ (taken to be that of $g$), which is open, and that $G(p,0,\mathbf{1};0) = G(q,0, \mathbf{1};0)=0$. We compute:
\[
D_{\Gamma_0'}G(x,\hat{y}, \doublehat{y};\epsilon) =  \left(\begin{array}{cc}QDv(x)\Gamma_0&QDv(x)\alpha\\D_xg\Gamma_0&D_xg\alpha + D_{\hat{y}}g\hat{\beta}+D_{\hat{\hat{y}}}g\doublehat{\beta}\end{array}\right)\,.
\]
Now $D_{\hat{y}}g = I$, $D_xg(p, 0, \mathbf{1};0) = 0$, $D_{\hat{\hat{y}}}g(p, 0, \mathbf{1};0) = 0$, and so, as $\hat{\beta}$ and $QDv(p)\Gamma_0$ are nonsingular, $J_{S'}G(p,0, \mathbf{1};0) = \mathrm{det}(QDv(p)\Gamma_0)\mathrm{det}(\hat{\beta}) \neq 0$. Corollary~\ref{corift} gives, for sufficiently small $\epsilon>0$, a curve $\epsilon \mapsto p_\epsilon := (x(\epsilon), \hat{y}(\epsilon), \doublehat{y}(\epsilon))$ of zeros of $G$ on $(p,0,\mathbf{1})+S'$ such that $\lim_{\epsilon \to 0+}(x(\epsilon), \hat{y}(\epsilon), \doublehat{y}(\epsilon)) = (p,0,\mathbf{1})$. For sufficiently small $\epsilon>0$, $x(\epsilon)\gg 0$, $V(x(\epsilon)) \gg 0$ (as $x(\epsilon) \to p \gg 0$) and $\doublehat{y}(\epsilon)\gg 0$ (as $\doublehat{y}(\epsilon) \to \mathbf{1} \gg 0$), and consequently $\hat{y}(\epsilon) \gg 0$ as
\begin{equation}
\label{eqyhat1}
\hat{y}(\epsilon) = \epsilon V(x(\epsilon)) \circ x(\epsilon)^\gamma \circ \doublehat{y}(\epsilon)^\delta.
\end{equation}
Thus the curve $p_\epsilon$ consists of positive equilibria of $\mathcal{R}'$. An identical argument replacing $p$ with $q$ gives positive equilibria $q_\epsilon$ on $(q,0,\mathbf{1})+S'$ approaching $(q,0,\mathbf{1})$ as $\epsilon \to 0+$. Since $(p,0,\mathbf{1}) - (q, 0, \mathbf{1}) = (p-q, 0, 0) \in S'$, $p_\epsilon$ and $q_\epsilon$ are compatible. 

We can now infer nondegeneracy of $p_\epsilon$ and $q_\epsilon$ using Lemma~\ref{lemnondegen}(a). Set
\[
f(x,y;\epsilon) := \underline{v}(x) - k(\epsilon) \circ x^{c^\mathrm{t}} \circ y^{\beta^\mathrm{t}} \quad \mbox{and}\quad F(x,y;\epsilon):= \Gamma'\left(\begin{array}{c}v(x)\\f(x,y;\epsilon)\end{array}\right).
\]
Define
\[
\begin{array}{l}
A_\epsilon := \epsilon D_xf(x(\epsilon), y(\epsilon);\epsilon)=\epsilon\left[D\underline{v}(x(\epsilon)) - \mathrm{diag}(\underline{v}(x(\epsilon)))c^\mathrm{t}\mathrm{diag}(\mathbf{1}/x(\epsilon))\right]\,,\\
B_\epsilon:=\epsilon D_yf(x(\epsilon), y(\epsilon); \epsilon) = -\mathrm{diag}(\underline{v}(x(\epsilon)))\beta^\mathrm{t}\mathrm{diag}(\bm{\epsilon}/y(\epsilon))\,.
\end{array}
\]
Set $N_1:=\mathrm{diag}(\underline{v}(p))$, $C_0 := \mathrm{det}(N_1)$ and $N_2:= \lim_{\epsilon \to 0+}\mathrm{diag}(\bm{\epsilon}/y(\epsilon))$ (which, by (\ref{eqyhat1}), is defined and nonnegative). Note that $C_0 > 0$. It is clear that $A_\epsilon \to 0$ as $\epsilon \to 0+$ as the quantity in the square bracket approaches the constant matrix $\left[D\underline{v}(p) - N_1c^\mathrm{t}\mathrm{diag}(\mathbf{1}/p)\right]$ as $\epsilon \to 0+$. From (\ref{eqyhat1}), $K':=N_2[\langle m \rangle]$ is a positive constant, while $N_2[\theta] = 0$ for $\theta \subseteq \langle k \rangle$, $|\theta|=m$, $\theta \neq \langle m \rangle$. Then $N_0:=\lim_{\epsilon \to 0+}B_\epsilon\beta = -N_1\beta^\mathrm{t}N_2\beta$, and by the Cauchy-Binet formula,
\[
\mathrm{det}(N_0) =  (-1)^mC_0\sum_{\theta \subseteq \langle k \rangle,\,\, |\theta| = m}N_2[\theta](\beta[\theta|\langle m \rangle])^2 \to (-1)^mC_0K'\beta[\langle m \rangle]^2\neq 0,
\]
and so $N_0$ is nonsingular. All the conditions of Lemma~\ref{lemnondegen}(a) are met, and we conclude that for each sufficiently small $\epsilon>0$, $p_\epsilon$ is a nondegenerate equilibrium of $\mathcal{R}'$. An identical argument applies to $q_\epsilon$.

Almost identically to the calculation in the proof of Theorem~\ref{thmblockadd}, $N_0$ can be seen to be Hurwitz. If $p$ is linearly stable for $\mathcal{R}$, namely $D_{\mathrm{im}\,\Gamma}\Gamma v(p)$ is Hurwitz, then by Lemma~\ref{lemnondegen}(b), for each sufficiently small $\epsilon>0$, $p_\epsilon$ is linearly stable with respect to $\mathrm{im}\,\Gamma'$. An identical argument applies to $q_\epsilon$, and so if $\mathcal{R}$ admits MPSE, then so does $\mathcal{R}'$.
\end{myproof}

\begin{remark}[Kinetic assumptions in Theorems~\ref{thmblockadd}~and~\ref{thmintermediates}]
\label{remkin}
Examining the proofs of Theorems~\ref{thmblockadd}~and~\ref{thmintermediates} we see that mass action kinetics does not really play a fundamental role. Most important are the equations (\ref{eqyhat}) and (\ref{eqyhat1}), namely that we can sufficiently control the rates of the added reactions to ensure that the $\hat{y}$ values at the new equilibria approach $0$ as some parameter $\epsilon \to 0+$. The results admit generalisation in this direction.
\end{remark}

\section{Conclusions}

We have begun to describe how a CRN may be enlarged while maintaining the properties of having multiple positive nondegenerate equilibria (MPNE), or multiple positive linearly stable equilibria (MPSE). The modifications in Theorems~\ref{thmnewdepreac}~to~\ref{thmintermediates} collectively define a partial order $\preceq$ on the set of all CRNs. In the biologically motivated example of Section~\ref{secbioexample} we identify an ``atom of MPSE'', namely a CRN which admits MPSE and which is minimal w.r.t. $\preceq$, and show how this atom occurs in various published models, immediately implying MPSE in these models. We suspect that often the small multistationary networks described in \cite{JoshiShiu2016}, minimal w.r.t. the induced subnetwork partial order, are also minimal w.r.t. $\preceq$, and hence form natural building-blocks of CRNs displaying MPNE. This remains to be confirmed.

Our approach was local: the main theoretical tool used was the IFT and, informally speaking, we mostly constructed $\mathcal{R}'$ as a {\em perturbation} of $\mathcal{R}$ in some sense. The results presented here certainly do not exhaust the possibilities in this approach. We have chosen not to include a number of partial or isolated results in the same vein as Theorems~\ref{thmblockadd}~and~\ref{thmintermediates}. These theorems themselves can probably be strengthened; for example, with some added effort, reversibility of the added reactions in Theorem~\ref{thmblockadd} can probably be replaced with a weak reversibility requirement. It also seems that insisting that the original CRN should have mass action kinetics may allow modifications which our less restrictive assumptions would not permit because it forces certain relationships to hold between reaction rates: in particular, the ratio of the rates of two reactions with the same source complex would be constant. 

Going beyond the techniques here, it seems likely that algebraic approaches may reveal network modifications which preserve MPE or MPNE, but which could not be predicted using a local approach: some of the modifications in Section~\ref{secextended} and the results in \cite{feliuwiufInterface2013} point in this direction. Ultimately the existence of MPE for mass action systems is about the cardinality of the intersection of certain algebraic varieties: it may be that adding new species or reactions in certain ways can be proved to allow MPE, but not by a local approach. Working towards the ``best'' partial order on CRNs from the point of view of the inheritance of MPNE is a project to be continued.

\appendix

\section{MPNE in the minimal MAPK model of Section~\ref{secbioexample}}
\label{appbio}

The evolution of $x_1:=[\text{MKK-pp}]$, $x_2:=[\text{MAPK}]$, $x_3:=[\text{MKK-pp--MAPK}]$, $x_4:=[\text{MAPK-p}]$ and $x_5:=[\text{MAPK-pp}]$ is governed by the ODE system:
\[
\begin{array}{lll}
\dot x_1 = -k_1x_1x_2+(k_2+k_3)x_3, & \dot x_2 =-k_1x_1x_2+k_2x_3+k_6x_4, & \\\dot x_3 = k_1x_1x_2-(k_2+k_3)x_3,&
\dot x_4 =k_3x_3-k_4x_1x_4+k_5x_5-k_6x_4, & \dot x_5 =k_4x_1x_4-k_5x_5\,. 
\end{array}
\]
which has precisely two independent conserved quantities
$M=x_1+x_3\ge 0$ and $N=x_2+x_3+x_4+x_5\ge 0.$
At steady state, one can easily express all variables in terms of $x_2$. We omit the calculations, but note that positive values of $x_2$ will automatically give rise to positive values for all variables except $x_5$. Substituting these in $\dot x_5=0$ yields a cubic equation in $x_2$:
\begin{eqnarray}\label{eq:univariate}
f(x_2) &:=& -k_1^2 k_5 k_6 x_2^3 + [k_1^2 k_5k_6(N -M) -  k_1^2 k_3 k_5 M - 2k_1k_5k_6(k_2+k_3)]x_2^2  \\ \nonumber                                    
&&+ [2N{ k_1}{ k_3}{ k_5}{ k_6}-{ k_3}^2{ k_5}{ k_6}-2{ k_2}{ k_3}
 { k_5}{ k_6}-M{ k_1}{ k_3}{ k_5}{ k_6}-{ k_2}^2
 { k_5}{ k_6}\\ \nonumber
 &&+2N{ k_1}{ k_2}{ k_5}
 { k_6}
 -M{ k_1}{ k_2}{ k_5}{ k_6}-M{ k_1}
 { k_3}^2{ k_5}-M{ k_1}{ k_2}{ k_3}
 { k_5}-M^2{ k_1}{ k_3}^2{ k_4}\\ \nonumber
 &&-M^2{ k_1}
 { k_2}{ k_3}{ k_4} ]x_2+
 k_5k_6(k_2+k_3)^2N = 0
\end{eqnarray}

Now we impose alternate signs on the coefficients of $f$ (a necessary condition for existence of three positive roots), and proceed by trial and error to produce parameter values that give rise to distinct positive solutions, and which make $x_5>0.$ For example, $k_1=k_2=k_3=k_4=k_6=1, k_5=3, M=100, N=600$ results in three positive steady states computed numerically as
\[
(x_1, x_2, x_3, x_4, x_5)=\left\{\begin{array}{l}(0.5183, 383.8, 99.48, 99.48, 17.19),\\(14.69, 11.61, 85.31, 85.31, 417.8),\\(78.79, 0.5384, 21.21,  21.21, 557.0).\end{array}\right.
\]
and a short calculation of the reduced Jacobian shows that all of them are nondegenerate. It was checked numerically that the first and the third steady state are linearly stable w.r.t. their stoichiometry class (whereas the second is not) and thus this CRN admits MPSE.

\bibliographystyle{unsrt}

\end{document}